\numberwithin{equation}{section}
\newtheorem{Thm}[equation]{Theorem}
\newtheorem*{Thm*}{Theorem}
\newtheorem{Prop}[equation]{Proposition}
\newtheorem{Lem}[equation]{Lemma}
\newtheorem{Cor}[equation]{Corollary}
\theoremstyle{remark}
\newtheorem{Def}[equation]{Definition}
\newtheorem{Exa}[equation]{Example}
\newtheorem{Exas}[equation]{Examples}
\newtheorem{Cons}[equation]{Construction}
\newtheorem{Hyp}[equation]{Hypothesis}
\newtheorem{Rem}[equation]{Remark}
\newtheorem{Rec}[equation]{Recollection}
\newtheorem*{Ack}{Acknowledgements}
\newcommand{\nc}{\newcommand}
\nc{\dmo}{\DeclareMathOperator}
\dmo{\coker}{coker}
\dmo{\colim}{colim}
\dmo{\cone}{cone}
\dmo{\End}{End}
\dmo{\Hom}{Hom}
\dmo{\id}{id}
\dmo{\Id}{Id}
\dmo{\Ind}{Ind}
\dmo{\Ker}{Ker}
\dmo{\opname}{op}
\dmo{\supp}{supp}
\dmo{\Spc}{Spc}
\dmo{\Spec}{Spec}
\nc{\bbZ}{\mathbb{Z}}
\nc{\cat}[1]{\mathscr{#1}}
\nc{\EndHere}{\bibliographystyle{alpha}\bibliography{TG-articles}\end{document}}
\nc{\Endcat}[1]{\End_{\cat #1}}
\nc{\Homcat}[1]{\Hom_{\cat #1}}
\nc{\ie}{{\sl i.e.}\ }
\nc{\inv}{^{-1}}
\nc{\isoto}{\overset{\sim}{\,\to\,}}
\nc{\isotoo}{\overset{\sim}{\,\too\,}}
\nc{\op}{^{\opname}}
\nc{\xto}[1]{\xrightarrow{#1}}
\nc{\oto}[1]{\overset{#1}\to}
\nc{\otoo}[1]{\overset{#1}{\,\too\,}}
\nc{\Paul}[1]{{\color{ForestGreen}#1}}
\nc{\SET}[2]{\big\{\,#1\,\big|\,#2\,\big\}}
\nc{\too}{\mathop{\longrightarrow}\limits}
\nc{\unit}{\mathbb{1}}
\dmo{\Ab}{Ab}
\dmo{\Add}{Add}
\dmo{\coev}{coev}
\dmo{\Der}{D}
\dmo{\grmod}{grmod}
\dmo{\incl}{incl}
\dmo{\im}{im}
\dmo{\Loc}{Loc}
\dmo{\modname}{mod}%
\dmo{\Mod}{Mod}
\dmo{\pr}{pr}
\dmo{\rmH}{H}
\dmo{\stab}{stab}
\dmo{\SH}{SH}
\dmo{\Stab}{Stab}
\dmo{\hname}{h}
\nc{\ababs}{{\sl ab absurdo}}
\nc{\adj}{\dashv}
\nc{\adjto}{\rightleftarrows}
\nc{\aka}{{a.\,k.\,a.}\ }
\nc{\ala}{{\sl \`a la}\ }
\nc{\bbC}{\mathbb{C}}
\nc{\bbP}{\mathbb{P}}
\nc{\cf}{{\sl cf.}\ }
\nc{\eg}{{\sl e.g.}}
\nc{\gm}{\mathfrak{m}}
\nc{\hook}{\hookrightarrow}
\nc{\ideal}[1]{\langle #1\rangle}
\nc{\ihom}{{\mathsf{hom}}} 
\nc{\ihomcat}[1]{\ihom_{\cat #1}}
\nc{\into}{\mathop{\rightarrowtail}}
\nc{\loccit}{{\sl loc.\ cit.}}
\nc{\mmod}{\text{-\!}\modname}%
\nc{\ggrmod}{\text{-\!}\grmod}%
\nc{\modd}{\modname\text{\!-}}%
\nc{\Modd}{\Mod\text{\!-}}
\nc{\MMod}{\text{-\!}\Mod}
\nc{\onto}{\mathop{\twoheadrightarrow}}
\nc{\oursetminus}{\!\smallsetminus\!}
\nc{\potimes}[1]{^{\otimes #1}}
\nc{\qquadtext}[1]{\qquad\textrm{#1}\qquad}
\nc{\quadtext}[1]{\quad\textrm{#1}\quad}
\nc{\restr}[1]{_{|_{\scriptstyle #1}}}
\nc{\sbull}{{\scriptscriptstyle\bullet}}
\nc{\sstab}{\,\text{--}\stab}%
\nc{\SpcK}{\Spc(\cat K)}
\nc{\SpcTc}{\Spc(\cat T^c)}
\nc{\then}{$\Rightarrow$}
\nc{\bbe}{\mathbb{e}}
\nc{\bbf}{\mathbb{f}}
\nc{\ohook}[1]{\,\overset{#1}\hook\,}
\nc{\oonto}[1]{\,\overset{#1}\onto\,}
\nc{\MS}{\Modd\cat S^c}
\nc{\mS}{\modd\cat S^c}
\nc{\CS}{\cat C(\cat S^c)}
\nc{\MT}{\Modd\cat T^c}
\nc{\mT}{\modd\cat T^c}
\nc{\ML}{\MMod\cat{L}}
\nc{\mL}{\mmod\cat{L}}
\nc{\bL}{\cat B_0(\cat L)}
\nc{\BL}{\cat B(\cat L)}
\nc{\cL}{\cat C_0(\cat L)}
\nc{\CL}{\cat C(\cat L)}
\nc{\cA}{\cat{A}}
\nc{\cB}{\cat{B}}
\nc{\cC}{\cat{C}}
\nc{\cD}{\cat{D}}
\nc{\cJ}{\cat{J}}
\nc{\cP}{\cat{P}}
\nc{\cS}{\cat{S}}
\nc{\cT}{\cat{T}}
\nc{\cU}{\cat{U}}
\nc{\EB}{E_{\cB}}
\nc{\EBi}{E_{\cB_i}}
\nc{\EBp}{E_{\cB'}}
\nc{\EC}{E_{\cC}}
\nc{\AB}{\bar{\cA}_{\cB}}
\nc{\ABp}{\bar{\cA}_{\cB'}}
\nc{\AC}{\bar{\cA}_{\cC}}
\nc{\hB}{\boneda_{\cB}}
\nc{\hBp}{\boneda_{\cB'}}
\nc{\hC}{\boneda_{\cC}}
\nc{\hEB}{\hat E_{\cB}}
\nc{\hEBp}{\hat E_{\cB'}}
\nc{\hEC}{\hat E_{\cC}}
\nc{\bEB}{\bar E_{\cB}}
\nc{\bEBp}{\bar E_{\cB'}}
\nc{\bEC}{\bar E_{\cC}}
\nc{\fp}{^{\textrm{fp}}}
\nc{\tensid}{$\otimes$-ideal}
\nc{\tensids}{$\otimes$-ideals}
\nc{\tensnil}{$\otimes$-nilpotent}
\nc{\yoneda}{\mathbb{h}}
\nc{\boneda}{\bar\yoneda}
\nc{\bat}[1]{\bar{\cat{#1}}}
\nc{\Loct}{\Loc^{\otimes}}
\nc{\ihombat}[1]{\ihom_{\bat #1}}
\nc{\Hombat}[1]{\Hom_{\bat #1}}
\nc{\bunit}{\bar\unit}
\nc{\hunit}{\hat\unit}
\nc{\vcorrect}[1]{{\vphantom{\vbox to #1em{}}}}
\nc{\Spch}{\Spc^{\smallh}}
\nc{\SpchT}{\Spch(\cat{T}^c)}
\nc{\SpchS}{\Spch(\cat{S}^c)}
\nc{\SpcT}{\Spc(\cat{T}^c)}
\nc{\SpcS}{\Spc(\cat{S}^c)}
\nc{\Supph}{\Supp^{\smallh}}
\nc{\SuppH}{\Supp^{\bigH}}
\nc{\SuppBF}{\Supp_{{\rm BF}}}
\nc{\Snil}{\Supp^{\smallnil}}
\nc{\tristars}{\begin{center}*\ *\ *\end{center}}
\dmo{\Img}{Im}
\dmo{\maxx}{max}
\dmo{\smallnil}{nil}
\dmo{\smallh}{h}
\dmo{\bigH}{H}
\dmo{\Supp}{Supp}
\dmo{\Serre}{Serre}
\begin{document}


\title[Homological support in big tt-categories]{Homological support of big objects\\in tensor-triangulated categories}
\author{Paul Balmer}
\date{2020 January 1}

\address{Paul Balmer, Mathematics Department, UCLA, Los Angeles, CA 90095-1555, USA}
\email{balmer@math.ucla.edu}
\urladdr{http://www.math.ucla.edu/$\sim$balmer}

\begin{abstract}
Using homological residue fields, we define supports for big objects in tensor-triangulated categories and prove a tensor-product formula.
\end{abstract}

\subjclass[2010]{18D99; 20J05, 55U35}
\keywords{Tensor-triangular geometry, homological residue field, big support}

\thanks{Research supported by NSF grant~DMS-1901696.}

\maketitle


\vskip-\baselineskip\vskip-\baselineskip
\tableofcontents


\vskip-\baselineskip\vskip-\baselineskip
\section{Introduction}


%
\begin{Hyp}
\label{Hyp:big-tt}%
Let $\cT$ be a \emph{`big' tensor-triangulated category}, meaning a rigidly-compactly generated one, as in~\cite{BalmerFavi11}. So~$\cT$ admits small coproducts; its subcategory~$\cT^c$ of compact objects coincides with that of rigid (strongly-dualizable) objects; $\cT^c$ is essentially small and generates~$\cT$ as a localizing subcategory.
\end{Hyp}

Here are our main results. Explanations are given after the statement.
\begin{Thm}[{\Cref{sec:support}}]
\label{Thm:support-intro}%
One can assign to every object $X$ of~$\cT$ a subset~$\Supp(X)$ of the homological spectrum $\SpchT$ of~\cite{Balmer20a}, with the following properties:
\begin{enumerate}[\rm(a)]
\smallbreak
\item
\label{it:support-agree}%
For every compact $x\in \cT^c$, this support agrees with the usual one in~$\SpcT$, as in~\cite{Balmer05a}. In particular $\Supp(0)=\varnothing$ and $\Supp(\unit)=\SpchT$.
\medbreak
\item
\label{it:support-coprod}%
For every family $\{X_i\}_{i\in I}$ in~$\cT$, we have $\Supp(\,\bigsqcup_{i\in I}X_i)=\bigcup_{i\in I}\Supp(X_i)$.
\medbreak
\item
\label{it:support-triangle}%
For every exact triangle $X\to Y\to Z\to \Sigma X$ in~$\cT$, we have $\Supp(Z)\subseteq\Supp(X)\cup \Supp(Y)$. Moreover $\Supp(\Sigma X)=\Supp(X)$.
\medbreak
\item
\label{it:support-TPF}%
The Tensor-Product Formula holds: for every $X,Y\in\cT$ we have
\begin{equation*}
\Supp(X\otimes Y)=\Supp(X)\cap \Supp(Y).
\end{equation*}
\end{enumerate}
\end{Thm}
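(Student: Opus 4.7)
The plan is to define the support using the homological residue field functors of~\cite{Balmer20a}, suitably extended from~$\cT^c$ to all of~$\cT$. Recall that each homological prime $\cB\in\SpchT$ is by definition a maximal Serre $\otimes$-ideal of the small abelian category $\mT$, and its Gabriel quotient, the small homological residue field, is a tt-field. Composing the restricted Yoneda embedding $\yoneda\colon\cT\to\MT$ with the canonical coproduct-preserving extension of this Gabriel quotient to the big module category produces a homological, coproduct-preserving tt-functor $\hB\colon\cT\to\bat{A}_\cB$ into a Grothendieck tensor-abelian category. I would set
\[
\Supp(X)\ :=\ \SET{\cB\in\SpchT}{\hB(X)\neq 0}.
\]

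With this definition, properties~\eqref{it:support-coprod} and~\eqref{it:support-triangle} are then essentially formal. Coproduct-preservation gives $\hB(\bigsqcup_i X_i)\cong\bigsqcup_i\hB(X_i)$, which vanishes iff every summand does; homologicity of $\hB$ together with $\hB\Sigma\cong\Sigma\hB$ yields the long exact sequence needed for~\eqref{it:support-triangle} and for stability under suspension. For~\eqref{it:support-agree}, the restriction of~$\hB$ to~$\cT^c$ is by construction the classical small homological residue field, so $\hB(x)=0$ iff $x\in\cB\cap\cT^c$; unwinding this through the comparison map $\SpchT\to\SpcT$ of~\cite{Balmer20a} recovers the Balmer support, and the normalizations $\Supp(0)=\varnothing$ and $\Supp(\unit)=\SpchT$ are immediate.

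The real content of the theorem is the Tensor-Product Formula~\eqref{it:support-TPF}. The inclusion `$\subseteq$' requires a K\"unneth-type isomorphism $\hB(X\otimes Y)\cong\hB(X)\otimes\hB(Y)$ for \emph{all} $X,Y\in\cT$, extending the tt-structure beyond the compact case; I would establish it by leveraging coproduct/colimit compatibility of both $\otimes$ and $\hB$ together with the rigid-compact generation of~$\cT$, reducing to the known tt-functoriality of $\hB$ on $\cT^c\times\cT^c$. The reverse inclusion is the main obstacle: it amounts to showing that $\bat{A}_\cB$ behaves as a `tensor field' for all its objects, namely that no two nonzero objects of $\bat{A}_\cB$ can have vanishing tensor product. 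Maximality of~$\cB$ secures this for images of compact objects, but lifting it to the full Grothendieck target is non-trivial. My plan is to argue by reduction to compacts: given $\hB(X)\otimes\hB(Y)=0$ with $\hB(X)\neq 0$, extract a nonzero map $\hB(x)\to\hB(X)$ with $x\in\cT^c$, tensor with $\hB(Y)$, and use rigidity of~$\hB(x)$ in $\bat{A}_\cB$ — plus the small-scale tt-field property inherited from the maximality of $\cB\cap\cT^c$ — to force $\hB(Y)=0$. Verifying that $\bat{A}_\cB$ is indeed generated (as a Grothendieck category) by images of rigid compacts, and that rigidity survives the Gabriel localization cleanly enough for this argument to close, is where I expect the main technical work.
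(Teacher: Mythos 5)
Your definition is exactly the ``naive'' support $\SET{\cB}{\hB(X)\neq0}$ of~\eqref{eq:naive-support}, which the paper explicitly declines to use because the Tensor-Product Formula is not known (nor expected) to hold for it. Parts~\eqref{it:support-agree}--\eqref{it:support-triangle} do go through for your definition essentially as you say, and the K\"unneth isomorphism you worry about is automatic ($\hB$ is a tensor functor by construction, being the composite of the Day-convolution tensor functor $\yoneda$ with a Gabriel quotient by a \tensid). The genuine gap is precisely where you locate it, and your proposed repair does not close it. Maximality of~$\cB$ only says that $\AB\fp$ has no nonzero proper \emph{Serre} \tensids\ of \emph{finitely presented} objects. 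Given $\hB(X)\neq0$ and $\hB(X)\otimes\hB(Y)=0$, the subcategory $\Ker(\hB(X)\otimes-)$ is a \emph{localizing} \tensid\ of the big category~$\AB$; simplicity of~$\AB\fp$ tells you its finitely presented part is zero, but a localizing \tensid\ of~$\AB$ need not be generated by its finitely presented part, so nothing forces the big object $\hB(Y)$ to vanish. Your reduction to compacts founders on the same point: a nonzero map $\hB(x)\to\hB(X)$ tensored with $\hB(Y)$ only gives a zero \emph{composite}, and even granting $\hB(x)\otimes\hB(Y)=0$ with $\hB(x)\neq0$ rigid, you are back to the same unresolved question about big localizing \tensids\ of~$\AB$.

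The paper's route is different and this difference is the whole content of the theorem. One first proves (\Cref{Thm:max-to-max}) that each $\cB\in\SpchT$ is contained in a \emph{unique maximal localizing} \tensid\ $\cB'=\Ker([-,\hEB])$ of~$\cA$, and then defines $\Supp(X)=\SET{\cB}{[X,\EB]_\cT\neq0}$, which equals $\SET{\cB}{\hBp(X)\neq0}$ (\Cref{Prop:Supp}). This is a subset of your naive support, possibly proper. In the quotient $\ABp=\cA/\cB'$ the obstruction disappears by fiat: maximality and uniqueness of~$\cB'$ imply $\ABp$ has \emph{no} nonzero proper localizing \tensids\ at all, so $\Ker(\hBp(X)\otimes-)$, being a proper localizing \tensid\ when $\hBp(X)\neq0$ (flatness), must be zero, and the TPF follows. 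Agreement on compacts and on weak rings with the naive support is then recovered separately (\Cref{Prop:agree}, \Cref{Thm:rings}). Without \Cref{Thm:max-to-max} or some substitute controlling big localizing \tensids, your argument cannot be completed.
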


\goodbreak

In order to appreciate the homological spectrum~$\SpchT$, in which our support theory takes its values, let us give some context.

Big tt-categories~$\cT$ are used across homotopy theory, algebraic geometry and representation theory. They appear as `unital algebraic stable homotopy categories' in~\cite{HoveyPalmieriStrickland97}. More recent examples include derived categories of motives and stable $\mathbb{A}^{\!1}$-homotopy categories. Symmetric monoidal presentable stable $\infty$-categories~\cite{LurieHA} provide another possible source of examples.

In all cases, the optimal support theory on the essentially small subcategory~$\cT^c$ of compact objects is the one borne by the triangular spectrum $\SpcT$ of~\cite{Balmer05a}. This space $\SpcT$ is now known in many examples; see the survey~\cite{Balmer19}. On the other hand, it is not clear how to properly define the support of non-compact objects in general. This is the problem we want to address here.

In the famous BIK series~\cite{BensonIyengarKrause08,BensonIyengarKrause11c,BensonIyengarKrause11a,BensonIyengarKrause12a,BensonIyengarKrause12b}, Benson, Iyengar and Krause approach the question via a \textsl{deus ex machina}: They assume the existence of a \emph{noetherian} ring~$R$ acting nicely on~$\cT$. Then BIK define a support theory with values in~$\Spec(R)$ and prove many strong results, that apply particularly well to the representation theory of finite groups over fields. However the BIK setup is somewhat restrictive. Unsurprisingly, it does not cover derived categories of non-noetherian schemes -- but who cares about non-noetherian schemes? The real drawback is that some very reasonable tt-categories~$\cT$ are not stratified by any noetherian ring~$R$. Even in representation theory of finite groups, replacing the field of scalars by a commutative ring (like~$\bbZ$) sends the deus reeling in the machina, as discovered by BIK themselves in~\cite{BensonIyengarKrause13}. More importantly, topologists have long known that the chromatic tower of the stable homotopy category~$\SH$ is not a noetherian phenomenon. And $\SH$ is the initial tt-category: What happens in~$\SH$ has repercussions throughout the field. So the general problem remains wide open and important, beyond the BIK setting.

In the joint work with Favi~\cite{BalmerFavi11} and in Stevenson~\cite{Stevenson13}, the spectrum of the BIK ring~$R$ is replaced by the more canonical~$\SpcT$. A support for big objects was proposed in~\cite{BalmerFavi11} but we could not prove the Tensor-Product Formula for it. So, among the properties listed in \Cref{Thm:support-intro}, the most remarkable is probably~\eqref{it:support-TPF}.

In recent years, new tools have emerged, like the \emph{homological residue fields} of~\cite{BalmerKrauseStevenson19,Balmer20a}. These consist of homological tensor-functors
\begin{equation}
\label{eq:hB-intro}%
\hB\colon \cT\to \bat{A}_{\cB}
\end{equation}
from our big tt-category~$\cT$ to various tensor-\emph{abelian} categories~$\AB$. The parameter~$\cB$ lives in the aforementioned \emph{homological spectrum}~$\SpchT$ and the abelian categories~$\AB$ are `simple' (\Cref{Rem:simple}), as one would expect of the category of vector spaces over a field, for instance. We review this material in \Cref{Rec:Spch}. For now, suffice it to say that these functors~$\hB$ are abstract versions of:
\begin{itemize}
\item[--] ordinary residue fields in algebraic geometry,
\item[--] Morava $K$-theories in homotopy theory,
\item[--] cyclic shifted subgroups and $\pi$-points in modular representation theory.
\end{itemize}
They also give rise to a Nilpotence Theorem~\cite[Thm.\,1.1]{Balmer20a}. In summary, the homological spectrum $\SpchT$ and the residue fields~$\hB$ have a life of their own: They were not invented for the sake of the present paper. This homological spectrum~$\SpchT$ is also very close to the triangular one. Indeed there is a map
\begin{equation}
\label{eq:phi}%
\phi\colon\SpchT\onto \SpcT
\end{equation}
that is always surjective and actually bijective in all known examples, see~\cite[\S\,5]{Balmer20a}. So in first approximation, the reader can think of $\SpchT$ as equal to the more familiar triangular spectrum $\SpcT$ of compact objects. In second approximation, \Cref{App:hom-local} gives a reformulation of injectivity of~$\phi$. (This also explains the meaning of agreement on compacts~\eqref{it:support-agree}; see details in~\Cref{Prop:agree}.)

Following the sibylline suggestion of~\cite[Remark~4.6]{Balmer20a}, it is tempting to define the support of every big object~$X$ in~$\cT$ as the following subset of~$\SpchT$
\begin{equation}
\label{eq:naive-support}%
\SET{\cB\in\SpchT}{\hB(X)\neq 0}.
\end{equation}
This `naive' support is almost the right thing to do. It will work fine for small objects and for ring objects but it might still fail the Tensor-Product Formula. Our construction ends up being one notch more involved.

To explain how $\Supp(X)$ is constructed, we need to know a little more about the homological residue fields~$\hB$ of~\eqref{eq:hB-intro} and its target category~$\AB$. In that `residue' Grothendieck category~$\AB$, the subcategory of finitely presented objects~$\AB\fp$ has only $0$ and~$\AB\fp$ as Serre \tensids\ (\Cref{Rem:simple}) but a similar property for the big category~$\AB$ is not known to be true, nor is it really expected. However, $\AB$ admits a \emph{unique maximal localizing \tensid} (\Cref{Thm:max-to-max}). Our definition of the support of an object~$X$ in~$\cT$ is the collection of those~$\cB$ in~$\SpchT$ where $X$ does not belong to that unique maximal localizing \tensid.

One can make this more explicit in terms of~$\cT$. In~$\AB$, the $\otimes$-unit~$\bunit$ admits an injective hull,~$\bEB=\hB(\EB)$, that comes via~$\hB$ from a canonical pure-injective object~$\EB$ in~$\cT$. One has $\hB(X)=0$ if and only if $X\otimes \EB=0$. So the `naive' support of~\eqref{eq:naive-support} is $\SET{\cB}{X\otimes\EB\neq0}$. Our support is defined as
\begin{equation}
\label{eq:Supp}%
\Supp(X)=\SET{\cB\in\SpchT}{[X,\EB]\neq0}
\end{equation}
where $[-,-]$ stands for the internal-hom in~$\cT$. It is a subset of the naive support.

\medbreak

When given a support theory, it is natural to wonder whether $\Supp(X)=\varnothing$ implies $X=0$. We point out that there is no hope for such a result in our glorious generality, if one wants the Tensor-Product Formula. Indeed, if $\cT$ contains a non-zero object~$X$ such that $X\otimes X=0$ then $\Supp(X)$ must be empty by~\eqref{it:support-TPF}. Neeman~\cite{Neeman00} gives examples of such~$X\neq0$ with $X\otimes X=0$ in derived categories $\cT=\Der(R)$ of commutative rings~$R$. The Brown-Comenetz dual of the sphere is another example of such an object~$X$ in~$\cT=\SH$ itself, see~\cite[\S\,7]{HoveyStrickland99}.

Things are a little nicer with ring objects, as we now explain.

\begin{Def}
\label{Def:weak-ring}%
We say that an object~$A$ in~$\cT$ with a map~$\eta\colon \unit\to A$ (its `unit') is a \emph{weak ring} if $A\otimes\eta\colon A\to A\otimes A$ is a split monomorphism (whose retraction $A\otimes A\to A$ can be thought of as a unital non-associative multiplication on~$A$).
\end{Def}

Of course, actual ring objects are weak rings. The pure-injective objects~$\EB$ discussed above are weak rings as well, although they are not known to be rings in general. We then prove in~\Cref{Thm:rings}:
\begin{Thm}
\label{Thm:rings-intro}%
For all (weak) rings~$A$, the support coincides with the naive support
\[
\Supp(A)=\SET{\cB\in\SpchT}{\hB(A)\neq 0}.
\]
Furthermore, if $\Supp(A)=\varnothing$ then $A=0$.
\end{Thm}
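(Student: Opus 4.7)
I treat the two assertions in turn.

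For the asserted equality, the inclusion $\Supp(A)\subseteq\SET{\cB\in\SpchT}{\hB(A)\neq 0}$ holds for every object of~$\cT$ (as noted after~\eqref{eq:Supp}). For the reverse inclusion, fix $\cB\in\SpchT$ with $\hB(A)\neq 0$; since $\Hom_{\cT}(\unit,[A,\EB])=\Hom_{\cT}(A,\EB)$, it is enough to produce a nonzero morphism $A\to\EB$ in~$\cT$. Because $\EB$ is the canonical pure-injective realizing the injective hull $\bEB=\hB(\EB)$ of~$\bunit$, the theory of homological residue fields supplies a natural identification $\Hom_{\cT}(A,\EB)\cong\Hom_{\AB}(\hB(A),\bEB)$, and the task becomes producing a nonzero morphism $\hB(A)\to\bEB$ in~$\AB$.

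Now the weak ring hypothesis enters. Since $\hB$ is a symmetric monoidal homological functor, $\hB(A)$ inherits a weak ring structure in~$\AB$ with unit $\eta':=\hB(\eta)\colon\bunit\to\hB(A)$; and $\eta'\neq 0$, for otherwise the split monomorphism $\hB(A)\otimes\eta'=0$ would force $\hB(A)=0$. Using the characterisation of our support via the unique maximal localizing \tensid\ $\cat M\subset\AB$ (\Cref{Thm:max-to-max})---namely $[X,\EB]\neq 0 \iff \hB(X)\notin\cat M$---it suffices to show that a nonzero weak ring in~$\AB$ cannot lie in~$\cat M$. The plan is to argue that if $\hB(A)\in\cat M$, then the tensor- and retract-closure of~$\cat M$ together with the splitting $\hB(A)\otimes\hB(A)\to\hB(A)$ and the nonzero unit $\eta'$ would be incompatible with the properness of~$\cat M$ (which excludes $\bunit$).

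For the \emph{furthermore} part, suppose $\Supp(A)=\varnothing$. By the first part just proved, $\hB(A)=0$ for every $\cB\in\SpchT$. Apply the Nilpotence Theorem~\cite[Thm.\,1.1]{Balmer20a} to the unit $\eta\colon\unit\to A$: its source $\unit$ is compact by rigidity, and $\hB(\eta)=0$ for every~$\cB$ (as $\hB(A)=0$), so $\eta$ is $\otimes$-nilpotent, \ie $\eta^{\otimes n}=0$ for some $n\geq1$. On the other hand, iterating the split monomorphism $A\otimes\eta$ supplied by the weak ring structure shows that $A\otimes\eta^{\otimes n}=(A^{\otimes n}\otimes\eta)\circ\cdots\circ(A\otimes A\otimes\eta)\circ(A\otimes\eta)\colon A\to A^{\otimes(n+1)}$ is a composite of split monomorphisms, hence itself split mono. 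Combined with $A\otimes\eta^{\otimes n}=A\otimes 0=0$, a zero morphism that is split mono forces $A=0$.

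The main obstacle is the step in the first part asserting that a nonzero weak ring in~$\AB$ escapes the maximal localizing \tensid~$\cat M$. The unit $\eta'\colon\bunit\to\hB(A)$ together with the splitting $\hB(A)\otimes\hB(A)\to\hB(A)$ `anchor' $\hB(A)$ to~$\bunit$, but turning this anchoring into a genuine contradiction with $\hB(A)\in\cat M$ requires careful use of the properness, retract-closure, and tensor-closure of~$\cat M$, and is the central technical content of the argument.
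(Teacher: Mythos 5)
Your reduction of the problem and your treatment of the \emph{furthermore} part are correct and follow the paper's route: the paper likewise deduces $\hB(A)=0$ for all $\cB$ from the first part, applies the Nilpotence Theorem to $\eta\colon\unit\to A$ to get $\eta\potimes{n}=0$, and concludes from the iterated split monomorphism $A\otimes\eta\potimes{n}$ that $A=0$. Your translation of the condition $[X,\EB]\neq0$ into $\hB(X)\notin\cat{M}$, where $\cat M$ is the unique maximal localizing \tensid\ of~$\AB$, is also exactly the paper's \Cref{Prop:Supp} combined with \Cref{Prop:hB-hBp}.

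However, the step you defer --- that a nonzero $\otimes$-flat weak ring in~$\AB$ cannot lie in~$\cat M$ --- is not a technicality to be smoothed over later; it is the entire content of the theorem, and the ingredients you propose for closing it (properness, retract-closure and tensor-closure of~$\cat M$, plus $\eta'\neq0$) do not suffice. A localizing \tensid\ is closed under subobjects and quotients, so $\hB(A)\in\cat M$ forces $\bunit\in\cat M$ only if the unit $\eta'\colon\bunit\to\hB(A)$ is a \emph{monomorphism}; knowing merely that $\eta'\neq0$ gives nothing, since only the image of~$\eta'$, not $\bunit$ itself, is then trapped in~$\cat M$. Likewise the splitting of $\hB(A)\otimes\eta'$ only re-expresses $\hB(A)$ as a retract of $\hB(A)\potimes{2}$, which is no obstruction to membership in a \tensid. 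The missing ingredient is precisely \Cref{Prop:ring-in-AB}: for $\cB\in\SpchT$, a nonzero flat weak ring in~$\AB$ has monomorphic unit. Its proof is where the two hypotheses you never invoke enter: setting $K=\Ker(\eta')$, flatness gives $\hB(A)\otimes K=0$; if $K\neq0$, local coherence (\Cref{Prop:A/B}\,\eqref{it:A/B-c}) produces a nonzero \emph{finitely presented} $M\into K$ with $\hB(A)\otimes M=0$, so $\Ker(\hB(A)\otimes-)\cap\AB\fp$ is a nonzero Serre \tensid\ of~$\AB\fp$; the maximality of~$\cB$ (simplicity of~$\AB\fp$, \Cref{Rem:simple}) then forces it to be all of~$\AB\fp$, whence $\hB(A)=0$, a contradiction. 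Once $\eta'$ is a monomorphism, your argument closes immediately (either $\bunit\in\cat M$ contradicts properness, or equivalently the exact quotient $\bar Q\colon\AB\to\ABp$ preserves the monomorphism $\bunit\into\hB(A)$, giving $\hBp(A)\neq0$). Without that proposition, the proof is incomplete.
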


In other words, our support theory is particularly effective for (weak) rings. As an application, we revisit the problem of determining the image of the map of spectra induced by a tt-functor, see~\cite{Balmer18a}. Let $F\colon \cT\to \cS$ be a tensor-triangulated functor admitting a right adjoint~$U\colon \cS\to \cT$. Note that $F$ restricts to compact-rigid objects $F\colon\cT^c\to \cS^c$. As~$U$ is lax-monoidal, $U(\unit_\cS)$ is a ring object in~$\cT$. In~\cite{Balmer18a}, it is shown that when $U(\unit)$ is compact then $\supp(U(\unit))$ in~$\SpcT$ coincides with the image of the map $\SpcS\to \SpcT$ induced by~$F$. However, this assumption that the right adjoint~$U$ maps~$\unit$ to a compact object is very, very restrictive. We prove here an unconditional generalization:
\begin{Thm}[{\Cref{Thm:Spc-image}}]
\label{Thm:Spc-image-intro}%
As above, let $F\colon \cT\to \cS$ be a tt-functor between `big' tt-cate\-gories, with right-adjoint $U\colon \cS\to \cT$. Then the image of the map~$\Spch(F)\colon$ $\SpchS\to \SpchT$ is exactly the support $\Supp(U(\unit))$ of the ring object~$U(\unit)$. Consequently, the image of $\Spc(F)\colon \SpcS\to \SpcT$ is $\phi(\Supp(U(\unit)))$.
\end{Thm}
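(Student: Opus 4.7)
The plan is to prove the first claim and deduce the second from it using the compatibility of~\eqref{eq:phi}. The opening observation is that $U$, as the right adjoint of the symmetric monoidal functor $F$, is lax symmetric monoidal, so $U(\unit_{\cS})$ is a commutative ring object in $\cT$ and in particular a weak ring in the sense of~\Cref{Def:weak-ring}. Invoking \Cref{Thm:rings-intro} we obtain
$$
\Supp(U(\unit_{\cS}))=\SET{\cB\in\SpchT}{\hB(U(\unit_{\cS}))\neq 0},
$$
so the first claim reduces to showing the image of $\Spch(F)$ equals this `naive' support.

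For the inclusion of the image into the naive support, suppose $\cB=\Spch(F)(\cC)$ for some $\cC\in\SpchS$. By construction of $\Spch(F)$, the residue field $\hC\circ F\colon\cT\to\AC$ factors through $\hB$ via some additive functor $\AB\to\AC$, so that $\hB(X)=0$ implies $\hC(F(X))=0$. Applying this to $X=U(\unit_{\cS})$: the triangle identity for the adjunction $F\adj U$ shows that $F(\eta_{\unit_{\cT}})\colon\unit_{\cS}\to FU(\unit_{\cS})$ is a section of the counit $\epsilon_{\unit_{\cS}}\colon FU(\unit_{\cS})\to\unit_{\cS}$. Applying the additive functor $\hC$ then yields a split epimorphism $\hC(FU(\unit_{\cS}))\twoheadrightarrow\hC(\unit_{\cS})=\bunit_{\cC}\neq 0$, forcing $\hC(F(U(\unit_{\cS})))\neq 0$ and hence $\hB(U(\unit_{\cS}))\neq 0$, as required.

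For the reverse inclusion, fix $\cB\in\SpchT$ with $\hB(U(\unit_{\cS}))\neq 0$; we must produce $\cC\in\SpchS$ lying over $\cB$ under $\Spch(F)$. The idea is to transfer nonvanishing data from $\cT$ to $\cS$ via the composite $\hB\circ U\colon\cS\to\AB$, whose value on $\unit_{\cS}$ is by hypothesis the nonzero object $\hB(U(\unit_{\cS}))$. The plan is to extract from this nonvanishing a proper Serre $\otimes$-ideal of $\mS$, and then extend it via Zorn to a maximal Serre $\otimes$-ideal $\cC\in\SpchS$; the uniqueness of the maximal localizing $\otimes$-ideal in the Grothendieck target (\Cref{Thm:max-to-max}) guarantees that this maximal extension defines a genuine homological prime, and unwinding the definition of $\Spch(F)$ together with the maximality of $\cB$ yields $\Spch(F)(\cC)=\cB$.

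The second assertion follows from the first and~\eqref{eq:phi}: the square $\Spc(F)\circ\phi_{\cS}=\phi_{\cT}\circ\Spch(F)$ commutes and $\phi_{\cS}$ is surjective, so $\Img(\Spc(F))=\phi_{\cT}(\Img(\Spch(F)))=\phi_{\cT}(\Supp(U(\unit_{\cS})))$. The main obstacle is the reverse inclusion in the third paragraph: one must verify that the maximal Serre $\otimes$-ideal produced from the composite $\hB\circ U$ genuinely pulls back under $\Spch(F)$ to $\cB$ itself and not to some strictly smaller prime — a going-up-type statement whose proof requires a careful analysis of how the pullback map on $\Spch$ interacts with the construction of the homological residue fields $\hB$ and $\hC$.
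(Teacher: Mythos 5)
Your first inclusion, $\Img(\Spch(F))\subseteq\Supp(U(\unit))$, is correct and in fact a clean shortcut: the triangle identity makes $\bunit$ a retract of $\hC(FU(\unit))$, and the factorization $\hC\circ F\cong\bar F\circ\hB$ (valid because $\hat F(\Loc(\cB))\subseteq\Loc(\cC)$ when $\cB=\hat F\inv(\cC)$) then forces $\hB(U(\unit))\neq0$. The paper instead argues via the action of the ring $\hB(U(\unit))$ on $\hB(U(\EC))$ and the fact that $\EB$ is a direct summand of $U(\EC)$; your route avoids that lemma entirely for this direction.

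The reverse inclusion, however, is where essentially all of the content of the theorem lies, and your third paragraph contains no argument — only a plan whose key steps are left open and which you yourself flag as ``the main obstacle.'' Concretely: (1) you never specify the proper Serre \tensid\ of~$\mS$ to which Zorn is applied; the paper takes $\cC_0=\Ker\big((\hat F(\hEB)\otimes-)\restr{\mS}\big)$, and proving that $\cC_0\neq\mS$ is the crux --- it uses the exact sequence $I\into\hunit\to\hEB$, the expression of~$I$ as a filtered colimit of finitely presented subobjects $M\into I$ with $M\in\cB$, and the projection formula $\hat U\hat F(M)\cong\hat U(\hunit)\otimes M$ to derive the contradiction $\hEB\otimes\hat U(\hunit)=0$ from the hypothesis $\hB(U(\unit))\neq0$. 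None of this appears in your sketch, and the kernel of $\hB\circ U$ that you propose to use is not even a \tensid, since $U$ is only lax-monoidal. (2) The statement that $\Spch(F)(\cC)=\cB$ requires showing $\cB\subseteq\hat F\inv(\cC)$, which follows from $\hat F(\cB)\subseteq\cC_0$ (if $\hEB\otimes M=0$ then $\hat F(\hEB)\otimes\hat F(M)=0$) together with maximality of~$\cB$ and properness of $\hat F\inv(\cC)$; this is exactly the ``going-up'' step you defer. (3) Your appeal to \Cref{Thm:max-to-max} is misplaced: producing a maximal Serre \tensid\ of $\mS$ containing $\cC_0$ is a direct Zorn argument on Serre \tensids\ of finitely presented objects, and has nothing to do with uniqueness of maximal \emph{localizing} \tensids\ of the big module category. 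As written, the proof of the harder half is missing.
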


\begin{Ack}
I am very thankful to Greg Stevenson for his comments, and in particular for catching an excessively enthusiastic claim in a previous version of this work.
\end{Ack}

\goodbreak
\section{Yoneda and modules}
\label{sec:modules}%
\medbreak

Many readers can safely skip this section and refer back to it as needed, especially those familiar with the module category
\[
\cA:=\MT=\Add((\cT^c)\op,\Ab)
\]
of additive contravariant functors from~$\cT^c$ to abelian groups.

\begin{Rec}
\label{Rec:MT}%
The abelian category $\cA=\MT$ is a Grothendieck category, whose subcategory of finitely presented objects $\cA\fp=\mT$ coincides with the Freyd envelope of~$\cT^c$ (\cite[Chap.\,5]{Neeman01}). The (restricted) Yoneda embedding~$\yoneda$
\[
\xymatrix@H=1.5em@R=1em{
\cT^c \quad \ar@{^(->}[r]^-{\yoneda} \ar@{^(->}@<-.5em>[d]
& \quad \mT=\cA\fp \ar@{^(->}@<-.5em>[d]
\\
\cT \quad \ar[r]^-{\yoneda}
& \quad \MT=\cA
}
\]
is defined by $\yoneda(X)=\hat X$ where $\hat X=\Homcat{T}(-,X)\restr{\cT^c}$. This functor $\yoneda\colon \cT\to \cA$ is homological (maps distinguished triangles to exact sequences), preserves coproducts and is universal among those (\cite[Cor.\,2.4]{Krause00}). It is also conservative.

Restricted-Yoneda~$\yoneda$ is fully faithful on~$\cT^c$, and identifies the latter with finitely presented projective objects in~$\cA$. Every (big) object $M\in\cA$ is a filtered colimit of finitely presented objects. (Indeed $\cA$ is \emph{locally coherent}; see~\cite[A.7]{BalmerKrauseStevenson17app}.) Also, every object~$M\in\cA$ is a quotient of a coproduct $\sqcup_{i\in I}\,\hat x_i$ of rigid-compact objects~$x_i\in \cT^c$. For an object $P=\sqcup_{i\in I}\,x_i$ with all~$x_i\in\cT^c$ (or a summand of such a coproduct) and for $Y\in \cT$ arbitrary, restricted-Yoneda yields an isomorphism
\begin{equation}
\label{eq:Hom-proj}%
\yoneda\colon\Homcat{T}(P,Y)\isoto \Homcat{A}(\hat P,\hat Y)\,.
\end{equation}
Hence all projectives in~$\cA$ are $\hat P$ for $P$ a summand of some~$\sqcup_{i\in I}\,x_i$ with all $x_i\in\cT^c$.

The category~$\cA$ also has enough injectives and they also come from~$\cT$. By~\cite{Krause00}, they are all of the form~$\hat E$ for a unique~$E\in\cT$, called a \emph{pure-injective}. For every object~$X$ and every pure-injective~$E$ in~$\cT$, restricted-Yoneda gives an isomorphism
\begin{equation}
\label{eq:Hom-pure-inj}%
\yoneda\colon\Homcat{T}(X,E)\isoto \Homcat{A}(\hat X,\hat E)\,.
\end{equation}
\end{Rec}

\begin{Rec}
\label{Rec:tensor-on-MT}%
An essential feature of the module category is its tensor product, obtained by Day convolution, and discussed in~\cite[App.\,A]{BalmerKrauseStevenson17app}. This tensor is colimit-preserving in each variable, in particular it is right-exact. It makes the restricted-Yoneda functor (not just the part on~$\cT^c$) into a tensor functor $\yoneda\colon \cT\to \cA$ and every object in the image of~$\cT$ is $\otimes$-flat, \ie $\hat X\otimes-$ is exact for all~$X\in\cT$.

As a consequence of this, all projective objects of~$\cA$ and, perhaps more remarkably, all injective objects of~$\cA$ are $\otimes$-flat.

By general Grothendieck-category theory $\cA$ is then \emph{closed} monoidal, \ie it admits an internal-hom functor $[-,-]_\cA\colon \cA\op\times \cA\too\cA$. Beware that $\yoneda$ might not be a closed functor but one can easily upgrade~\eqref{eq:Hom-pure-inj} into an isomorphism
\begin{equation}
\label{eq:[-,E]}%
\yoneda([X,E]_\cT)\cong [\hat X,\hat E]_\cA
\end{equation}
for every $X,E\in\cT$ with $E$ pure-injective, by testing via $\Homcat{A}(\hat c,-)$ with $c\in\cT^c$ and using that $\yoneda$ is a tensor functor.
\end{Rec}

\begin{Rec}
\label{Rec:A/C}%
Let $\cC$ in~$\cA$ be a localizing subcategory (\ie closed under coproducts, extensions, subobjects and quotients). We have a Gabriel quotient~$\cA/\cC$
\begin{equation}
\label{eq:A/C}%
\vcenter{\xymatrix@R=1.8em{
\cat{A} \ar@<-.3em>@{->>}[d]_-{Q_\cC}
\\
\cA/\cat{C} {}^{\vcorrect{.6}} \ar@<-.3em>@{_(->}[u]_-{R_\cC}
}}
\end{equation}
where $Q_\cC$ is the universal exact functor with kernel~$\cC$, its right adjoint $R_\cC$ is fully-faithful and $Q_\cC\circ R_\cC\cong\Id_{\cA/\cC}$. We shall only consider subcategories $\cC$ that are $\otimes$-ideal ($\cA\otimes \cC\subseteq\cC$), in which case $\cA/\cC$ inherits a unique tensor structure such that $Q_\cC$ is a tensor functor. When $\cC$ is clear from the context, we often write $\bar{X}$ instead $Q_\cC(\hat X)$ for $X\in\cT$. We also write $\bat{A}_{\cC}$ (or just $\bat{A}$) for $\cA/\cC$ and
\begin{equation}
\label{eq:hC}%
\boneda_{\cC}\colon \cT\too \bat{A}_\cC=\cat{A}/\cC
\end{equation}
for the composite $Q_\cC\circ \yoneda$, a coproduct-preserving cohomological tensor-functor.
\end{Rec}

Those quotients $\cA/\cC$ inherit a number of properties that hold for~$\cA$.
\begin{Prop}
\label{Prop:A/C}%
With notation as in~\Cref{Rec:A/C}, we have:
\begin{enumerate}[\rm(a)]
\item
\label{it:A/C-a}%
The Gabriel quotient $\bat{A}=\cA/\cC$ is a Grothendieck category, that is closed monoidal. Its tensor product preserves colimits in each variable.
\item
\label{it:A/C-b}%
Every $X\in\cT$ has $\otimes$-flat image $\bar X=Q_\cC(\hat X)$ in~$\bat{A}$. If $x\in \cT^c$ then $\bar x$ is rigid.
\item
\label{it:A/C-c}%
Every object of~$\bat{A}$ is a quotient of a coproduct $\sqcup_{i\in I}\,\bar x_i$ with all $x_i\in \cT^c$.
\item
\label{it:A/C-d}%
Every injective object of~$\bat{A}$ is of the form~$\bar E$ for some pure-injective~$E$ in~$\cT$, uniquely characterized by the property~$\hat E\cong R_\cC(\bar E)$ in~$\cA$. In particular, $\bar E$ is $\otimes$-flat. Furthermore, $\boneda_\cC\colon \cT\to \bat{A}$ induces, for every $X\in\cT$, an isomorphism
\[
\Homcat{T}(X,E)\isoto \Homcat{{\bat{A}}}(\bar X,\bar E).
\]
\item
\label{it:A/C-e}%
The right adjoint $R_\cC\colon \bat{A}\to \cat{A}$ is lax-monoidal and closed; more precisely for every $M\in\cA$ and $N\in\bat{A}$, if we denote by $[-,-]$ the internal-homs, we have
\[
[M,R_\cC(N)]_{\cA}\cong R_{\cC}([Q_\cC(M),N]_{\bat{A}}).
\]
\item
\label{it:A/C-f}%
For every injective $\bar E$ in~$\bat{A}$, the internal-hom $[-,\bar E]_{\bat{A}}\colon \bat{A}\op\to \bat{A}$ is exact.
\end{enumerate}
\end{Prop}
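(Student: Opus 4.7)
The plan is to combine the general Gabriel quotient formalism with the specific features of $\cA$ recalled in \Cref{Rec:MT} and \Cref{Rec:tensor-on-MT}. Items (a), (c) and (e) are essentially formal. For (a), since $\cC$ is a localizing $\otimes$-ideal the quotient $\bat A$ inherits a unique symmetric monoidal structure making $Q_\cC$ monoidal; tensor-cocontinuity descends from $\cA$ via $\bar M \otimes \colim_i \bar N_i \cong Q_\cC(M \otimes \colim_i R_\cC(\bar N_i)) \cong \colim_i (\bar M \otimes \bar N_i)$, using that $Q_\cC$ preserves colimits, $Q_\cC R_\cC = \Id$, and $Q_\cC$ is monoidal; closedness then follows from the special adjoint functor theorem for locally presentable categories. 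Item (c) is immediate from \Cref{Rec:MT} by applying the cocontinuous, essentially surjective $Q_\cC$ to the epis $\sqcup_i\,\hat x_i \twoheadrightarrow M$. Item (e) is the standard fact that the right adjoint of a symmetric monoidal functor carries a canonical lax-monoidal structure; the closed-functor formula is a short Yoneda calculation:
\[
\Homcat{A}(L, [M, R_\cC N]_\cA) = \Homcat{A}(L \otimes M, R_\cC N) = \Hombat{A}(Q_\cC L \otimes Q_\cC M, N) = \Homcat{A}(L, R_\cC[Q_\cC M, N]_{\bat A}).
\]

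For items (b) and (d), the key observation is the natural isomorphism of functors $\bat A \to \bat A$
\[
\bar X \otimes (-) \;\cong\; Q_\cC \circ (\hat X \otimes -) \circ R_\cC,
\]
obtained by combining the monoidality of $Q_\cC$ with $Q_\cC R_\cC = \Id$. Since $R_\cC$ is left exact (right adjoint), $\hat X \otimes -$ is exact on $\cA$ by the flatness statement of \Cref{Rec:tensor-on-MT}, and $Q_\cC$ is exact, this composite is left exact; combined with right-exactness from (a), $\bar X \otimes -$ is exact. Rigidity of $\bar x$ for $x \in \cT^c$ is then immediate: the symmetric monoidal $Q_\cC$ sends rigid objects to rigid objects. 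For (d), $R_\cC$ is right adjoint to the exact $Q_\cC$ so preserves injectives; hence an injective $J \in \bat A$ satisfies $R_\cC J \cong \hat E$ for a unique pure-injective $E$ by \Cref{Rec:MT}, whence $J \cong Q_\cC R_\cC J = \bar E$. Flatness of $\bar E$ is the same argument as for (b), now using that injectives in $\cA$ are $\otimes$-flat. The claimed Hom-isomorphism follows by chaining \eqref{eq:Hom-pure-inj} with the $Q_\cC \dashv R_\cC$ adjunction applied to $\hat E = R_\cC(\bar E)$.

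The main obstacle is (f). My plan is to first establish the analogous statement in $\cA$ itself: that $[-, \hat E]_\cA$ is exact on $\cA$. Using the projective generators $\{\hat x\}_{x \in \cT^c}$ of $\cA$ from \Cref{Rec:MT}, epimorphisms in $\cA$ are detected by $\Homcat{A}(\hat x, -)$. For a mono $A \hookrightarrow B$ in $\cA$, the flatness of $\hat x$ gives $\hat x \otimes A \hookrightarrow \hat x \otimes B$, and injectivity of $\hat E$ yields a surjection $\Homcat{A}(\hat x \otimes B, \hat E) \twoheadrightarrow \Homcat{A}(\hat x \otimes A, \hat E)$, which rewrites as $\Homcat{A}(\hat x, [B, \hat E]_\cA) \twoheadrightarrow \Homcat{A}(\hat x, [A, \hat E]_\cA)$ by hom-tensor adjunction; hence $[B, \hat E]_\cA \twoheadrightarrow [A, \hat E]_\cA$. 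Combined with the general left-exactness of the right adjoint $[-, \hat E]_\cA$, this shows $[-, \hat E]_\cA$ is exact on $\cA$. To transfer this conclusion to $\bat A$, I would specialize the formula of (e) to $\hat E = R_\cC(\bar E)$, obtaining
\[
R_\cC[M, \bar E]_{\bat A} \;\cong\; [R_\cC M, \hat E]_\cA.
\]
Given a short exact sequence $0 \to M \to M' \to M'' \to 0$ in $\bat A$, the left-exact $R_\cC$ produces a monomorphism $R_\cC M \hookrightarrow R_\cC M'$ in $\cA$, so the just-established exactness yields a surjection $[R_\cC M', \hat E]_\cA \twoheadrightarrow [R_\cC M, \hat E]_\cA$, i.e., $R_\cC[M', \bar E]_{\bat A} \twoheadrightarrow R_\cC[M, \bar E]_{\bat A}$. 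Applying the exact $Q_\cC$ and using $Q_\cC R_\cC = \Id$ gives the surjection $[M', \bar E]_{\bat A} \twoheadrightarrow [M, \bar E]_{\bat A}$ in $\bat A$, which together with the general left-exactness of $[-, \bar E]_{\bat A}$ completes the exactness of $[-, \bar E]_{\bat A}$.
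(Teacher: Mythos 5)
Your proposal is correct and, for part~(f) --- the only part the paper actually proves rather than cites --- it follows essentially the same route: reduce to the exactness of $[-,\hat E]_{\cA}$ on~$\cA$ (for $\hat E=R_\cC(\bar E)$ injective) via the adjunction identity of~(e), the only cosmetic difference being that you transfer exactness to~$\bat{A}$ by applying the left-exact $R_\cC$ and then the exact $Q_\cC$, whereas the paper precomposes $[-,\bar E]_{\bat A}$ with $Q_\cC$ and uses that this detects exactness. Your arguments for (a)--(e), which the paper delegates to the references, and your direct proof that $[-,\hat E]_\cA$ is exact via the projective generators $\hat x$ and flatness, are all sound.
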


\begin{proof}
All these properties follow easily from \cite[App.\,A]{BalmerKrauseStevenson17app} and \cite[\S\,2\textrm{ and }App.\,A]{BalmerKrauseStevenson19}. We need to prove~\eqref{it:A/C-f}, for~\cite[Lem.\,2.8]{BalmerKrauseStevenson19} is only stated for~$\cA$. We have
\[
[-,\bar E]_{\bat{A}}\circ Q_{\cC}=[Q_{\cC}(-),\bar E]_{\bat{A}}\cong Q_{\cC}R_{\cC}([Q_\cC(-),\bar E]_{\bat{A}})\underset{\eqref{it:A/C-e}}{\cong} Q_{\cC}\circ[-,R_\cC(\bar E)]_{\cat{A}}
\]
and the latter is exact by~\cite[Lem.\,2.8]{BalmerKrauseStevenson19}, since $R_\cC\bar E$ is injective in~$\cA$ itself. As precomposition with $Q_\cC$ detects exactness for functors on~$\bat{A}$, we get the result.
\end{proof}

\begin{Rem}
\label{Rem:Sigma}%
Note that $\Sigma\colon \cT\to \cT$ induces an auto-equivalence $\Sigma\colon \cA\to \cA$ which is isomorphic to~$\yoneda(\Sigma\unit)\otimes-$. Hence all our \tensids~$\cC$ in~$\cA$ are stable under~$\Sigma$ and all quotients inherit a suspension~$\Sigma\colon \AC\isoto \AC$ such that $Q_\cC\Sigma\cong\Sigma Q_\cC$.
\end{Rem}

An important construction is the one of~\cite[\S\,3]{BalmerKrauseStevenson19}, slightly generalized:
\begin{Cons}
\label{Cons:E_C}%
For $\cC\subseteq\cA$ localizing \tensid, the injective hull of~$\bar{\unit}$ in~$\AC$ comes from~$\hC$, by \Cref{Prop:A/C}\,\eqref{it:A/C-d}. So there exists a morphism
\[
\eta_\cC\colon \unit\to \EC
\]
in~$\cT$ such that $\bar\eta_\cC=\hC(\eta_\cC)\colon \bar{\unit}\into \bar E_\cC$ is the injective hull. It is characterized by $R_\cC(\bEC)\cong\hEC$ in~$\cA$. Since $\bEC$ is flat and injective, $\bEC\otimes\bar\eta_{\cC}$ is a split monomorphism in~$\AC$, hence by \Cref{Prop:A/C}\,\eqref{it:A/C-d} again, the retraction exists in~$\cT$, meaning that $(\EC,\eta_\cC)$ is a \emph{weak ring} in~$\cT$ in the sense of \Cref{Def:weak-ring}.
\end{Cons}

\begin{Rec}
\label{Rec:A/B}%
A particular class of localizing $\otimes$-ideals $\cC\subseteq\cA$ are those generated by Serre $\otimes$-ideals $\cB\subseteq\cA\fp=\mT$ of finitely presented objects (where \tensid\ only means $\cA\fp\otimes\cB\subseteq\cB$ of course). Explicitly $\cC=\Loc(\cB)$ is the smallest localizing subcategory containing~$\cB$, which is then automatically \tensid\ in~$\cA$. In some notation, typically in indices, we drop the `$\Loc$' part and write only~$\cB$, like for instance with the canonical cohomological tensor-functor~\eqref{eq:hC}:
\begin{equation}
\label{eq:hB}%
\boneda_{\cB}\colon \cT\too \bat{A}_\cB:=\cat{A}/\Loc(\cB)\,.
\end{equation}
Similarly, $\EB$ means $E_{\Loc(\cB)}$ as in \Cref{Cons:E_C} for~$\cC=\Loc(\cB)$.
\end{Rec}

Under these additional assumptions, we know more than in \Cref{Prop:A/C}:

\begin{Prop}
\label{Prop:A/B}%
With notation as in~\Cref{Rec:A/B}, we have:
\begin{enumerate}[\rm(a)]
\item
\label{it:A/B-a}%
The Grothendieck category $\AB$ is still locally coherent. Its finitely presented objects $\bat{A}\fp=\cA\fp/\cB$ coincide with the Gabriel quotient of~$\cA\fp$ by~$\cB$.
\item
\label{it:A/B-b}%
We can recover $\cB$ and $\Loc(\cB)$ from the pure-injective weak ring~$E_\cB$ of~\Cref{Cons:E_C}, as $\cB=\SET{M\in\cA\fp}{\hEB\otimes M=0}$ and $\Loc(\cB)=\Ker(\hEB\otimes-)$.
\item
\label{it:A/B-c}%
Every subobject of a finitely presented object in~$\AB$ is the colimit of its finitely presented subobjects. In particular, if $I\into \bar\unit$ in~$\AB$ then $I=\mathop{\colim}\limits_{\textrm{f.p.\,}M\into I}M$.
\end{enumerate}
\end{Prop}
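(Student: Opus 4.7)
The plan is to handle the three parts in the order~\eqref{it:A/B-a}, \eqref{it:A/B-c}, \eqref{it:A/B-b}: part~\eqref{it:A/B-a} follows from a general theorem on Gabriel quotients of locally coherent Grothendieck categories; part~\eqref{it:A/B-c} is a routine consequence of local coherence; and part~\eqref{it:A/B-b} reduces to a computation genuinely using the pure-injective weak-ring structure of $\EB$ from~\Cref{Cons:E_C}.

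For~\eqref{it:A/B-a}, the input is that $\cA=\MT$ is locally coherent with $\cA\fp=\mT$ (\Cref{Rec:MT}) and that $\cB\subseteq\cA\fp$ is Serre by hypothesis. The classical fact that a Gabriel quotient of a locally coherent Grothendieck category by the localizing subcategory generated by a Serre subcategory of finitely presented objects remains locally coherent, with $\Loc(\cB)\cap\cA\fp=\cB$ and $\AB\fp\cong\cA\fp/\cB$, then applies directly (see \eg~\cite[App.~A]{BalmerKrauseStevenson17app}).

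For~\eqref{it:A/B-c}, given $I\into M$ with $M\in\AB\fp$, I would look at the directed family of images $\im(f)\subseteq I$ arising from morphisms $f\colon P\to M$ with $P\in\AB\fp$ and $\im(f)\subseteq I$. The local coherence established in~\eqref{it:A/B-a} forces $\Ker(f)\in\AB\fp$, so each $\im(f)=P/\Ker(f)$ is a finitely presented subobject of $M$ contained in~$I$. Since $\AB\fp$ is a generating family for~$\AB$, the directed union of these images equals~$I$, yielding $I=\colim_{N\into I,\,N\in\AB\fp}N$. The ``in particular'' statement is the special case $M=\bunit$.

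Part~\eqref{it:A/B-b} reduces to proving the equality $\Loc(\cB)=\Ker(\hEB\otimes-)$: intersecting with $\cA\fp$ and invoking $\Loc(\cB)\cap\cA\fp=\cB$ from~\eqref{it:A/B-a} then gives the first characterization of~$\cB$. For the inclusion ``$\Ker(\hEB\otimes-)\subseteq\Loc(\cB)$'', use that $Q_\cC$ is a tensor functor to get $Q_\cC(\hEB\otimes M)=\bEB\otimes\bar M$; since $\bEB$ is $\otimes$-flat and $\bunit\into\bEB$ is mono (\Cref{Prop:A/C}\,\eqref{it:A/C-d} and \Cref{Cons:E_C}), the map $\bar M=\bunit\otimes\bar M\into\bEB\otimes\bar M$ stays mono, and $\hEB\otimes M=0$ forces $\bar M=0$, i.e.\ $M\in\Loc(\cB)$. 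The reverse inclusion is the main obstacle: flatness of $\hEB$ in~$\cA$ makes $\Ker(\hEB\otimes-)$ a localizing subcategory, so it is enough to show $\cB\subseteq\Ker(\hEB\otimes-)$, namely that $\hEB\otimes M=0$ in~$\cA$ for every $M\in\cB$. This cannot be extracted from the monoidality of $Q_\cC$ alone---that only yields $\hEB\otimes M\in\Loc(\cB)$---and genuinely uses the characterization of $\hEB$ as $R_\cC(\bEB)$ for the injective hull of~$\bunit$ and the pure-injective weak-ring nature of~$\EB$; I would invoke the corresponding result from~\cite[\S\,3]{BalmerKrauseStevenson19}.
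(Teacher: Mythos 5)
Your overall route is essentially the paper's: the paper disposes of all three parts by citing \cite[\S\,3]{BalmerKrauseStevenson19}, and your sketches of (a) and (c) are correct unpackings of the standard locally coherent facts, while deferring the inclusion $\cB\subseteq\Ker(\hEB\otimes-)$ to that same source is no worse than what the paper itself does.

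There is, however, one genuine gap in part (b), in your argument for $\Ker(\hEB\otimes-)\subseteq\Loc(\cB)$. You claim that $\bar M=\bunit\otimes\bar M\to\bEB\otimes\bar M$ is a monomorphism ``since $\bEB$ is $\otimes$-flat and $\bunit\into\bEB$ is mono.'' Flatness of $\bEB$ says that the functor $\bEB\otimes(-)$ preserves monomorphisms; it does \emph{not} say that tensoring the fixed monomorphism $\bunit\into\bEB$ with an arbitrary object stays mono (already for $\bbZ$-modules, $2\colon\bbZ\to\bbZ$ is a mono between flat objects that is killed by $-\otimes\bbZ/2$). The repair is the one used in the paper's proof of \Cref{Prop:Ker@<C<Ker[]}: first embed $M$ into its injective hull $M\into F$ in~$\cA$, with $F$ automatically $\otimes$-flat by \Cref{Rec:tensor-on-MT}. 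Then the composite $\bar M\to\bEB\otimes\bar M\to\bEB\otimes\bar F$ coincides with $\bar M\into\bar F\to\bEB\otimes\bar F$, whose second map is mono because $\bar F$ is flat; hence the composite, and therefore $\bar M\to\bEB\otimes\bar M$, is mono, and $\hEB\otimes M=0$ forces $\bar M=0$, \ie $M\in\Loc(\cB)$. With that step corrected, your proposal goes through at the same level of rigor as the paper's citation-proof.
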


\begin{proof}
All this is in \cite[\S\,3]{BalmerKrauseStevenson19}.
\end{proof}

We shall need the following general observation.

\begin{Lem}
\label{Lem:Ker[-,E]}%
Let $E$ be an injective object in~$\cA$. Then the subcategory $\Ker [-,E]_{\cA}=\SET{M\in\cA}{[M,E]_{\cA}=0}$ is a localizing $\otimes$-ideal of~$\cA$.
\end{Lem}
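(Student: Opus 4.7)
The plan is to verify each of the four closure properties (subobjects/quotients, extensions, coproducts, and the $\otimes$-ideal property) separately, each time by reducing to a standard formal property of the internal-hom functor $[-,E]_{\cA}$. The proof should be essentially formal, relying on three ingredients: the fact that $[-,E]_{\cA}$ is exact because $E$ is injective (this is where we use the hypothesis on $E$), the fact that $[-,E]_{\cA}$ converts coproducts into products, and the tensor-hom adjunction inside $\cA$.

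More concretely, for any short exact sequence $0\to M'\to M\to M''\to 0$ in~$\cA$, exactness of $[-,E]_{\cA}$ on the right and left yields a short exact sequence
\[
0\to [M'',E]_{\cA}\to [M,E]_{\cA}\to [M',E]_{\cA}\to 0,
\]
from which closure of $\Ker[-,E]_{\cA}$ under subobjects, quotients and extensions is immediate. For coproducts, one uses that $[-,E]_{\cA}$ is a right adjoint in its first variable, hence sends the coproduct $\bigsqcup_i M_i$ to the product $\prod_i [M_i,E]_{\cA}$; this product vanishes as soon as each factor does. Finally, for the $\otimes$-ideal property, the adjunction
\[
[N\otimes M,E]_{\cA}\cong [N,[M,E]_{\cA}]_{\cA}
\]
(which holds in any closed monoidal category and is available in~$\cA$ by~\Cref{Rec:tensor-on-MT}) shows that if $[M,E]_{\cA}=0$ then $[N\otimes M,E]_{\cA}=0$ for every~$N\in\cA$.

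There is no real obstacle here: the only mildly substantive input is the exactness of $[-,E]_{\cA}$ when $E$ is injective, and this is standard (it is also reflected in \Cref{Prop:A/C}\,\eqref{it:A/C-f} where the same fact is proved for $\AB$). Everything else is pure formal nonsense from the closed symmetric monoidal structure on the Grothendieck category~$\cA$. The lemma will then be applied in the sequel (most likely with $E=\hEB$) to produce the localizing $\otimes$-ideals whose existence underlies the definition~\eqref{eq:Supp} of $\Supp(X)$.
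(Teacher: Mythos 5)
Your proof is correct, and the localizing part (exactness of $[-,E]_{\cA}$ from injectivity of~$E$, plus coproducts-to-products) is exactly what the paper does. Where you genuinely diverge is the \tensid\ property. You invoke the internal tensor-hom adjunction $[N\otimes M,E]_{\cA}\cong[N,[M,E]_{\cA}]_{\cA}$, which holds in any closed symmetric monoidal category and immediately gives $[N\otimes M,E]_{\cA}\cong[N,0]_{\cA}=0$; this is a clean one-line argument. The paper instead proceeds in two steps: first it checks closure under tensoring with \emph{rigid} objects~$\hat x$ via the formula $[\hat x\otimes M,E]\cong\hat x^\vee\otimes[M,E]$, and then, for arbitrary~$N$, it chooses an epimorphism $\sqcup_i\hat x_i\onto N$ from a coproduct of rigids, uses right-exactness of~$\otimes$ to get $\sqcup_i\hat x_i\otimes M\onto N\otimes M$, and concludes by the already-established closure under quotients. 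The paper's route is deliberately chosen to highlight a subtlety it returns to in a later remark: one cannot argue merely that ``rigid objects generate~$\cA$'', and the Serre property is genuinely needed in the bootstrap. Your adjunction argument sidesteps that issue entirely and is, if anything, more economical; its only cost is that it leans on the closed structure of~$\cA$ as a black box rather than on the explicit behaviour of $[-,E]$ against rigids, which the paper also wants on record for later use (e.g.\ in \Cref{Thm:max-to-max}).
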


\begin{proof}
This kernel $\Ker[-,E]$ is a localizing (in particular, Serre) subcategory because the internal-hom functor $[-,E]$ turns coproducts into products and is exact by \Cref{Prop:A/C}\,\eqref{it:A/C-f} (or \cite[Lem.\,2.8]{BalmerKrauseStevenson19}). Furthermore, this kernel is closed under tensoring with every rigid object~$\hat x\in\cA$ since $[\hat x\otimes M,E]\cong \hat x^\vee\otimes[M,E]$. Now for a general object~$N$, there exists an epimorphism $\sqcup_{i\in I}\hat x_i\onto N$ with all~$\hat x_i$ rigid by \Cref{Rec:MT}. Hence by right-exactness of the tensor we get an epimorphism $\sqcup_{i\in I}\hat x_i\otimes M\onto N\otimes M$ for every $M\in\cA$. If $M$ belongs to $\Ker([-,E])$ we know that $\sqcup_{i\in I}\hat x_i\otimes M$ also does, hence so does $N\otimes M$ as the kernel is already known to be closed under quotients, since it is a Serre subcategory.
\end{proof}

\begin{Rem}
The same statement holds in any $\AC$, with a similar proof.
\end{Rem}

\goodbreak
\section{Maximal localizing tensor-ideals}
\label{sec:maximal}%
\medbreak

We recall the homological spectrum~$\SpchT$ and prove that every prime $\cB\in\SpchT$ is contained in a unique maximal localizing \tensid\ (\Cref{Thm:max-to-max}).

\begin{Rec}
\label{Rec:Spch}%
The \emph{homological spectrum}~$\SpchT$ consists of all maximal Serre \tensids\ $\cB$ of the abelian subcategory~$\cA\fp=\mT$ of finitely presented $\cT^c$-modules. We call those~$\cB$ the \emph{homological primes} of~$\cT$. Note that they only depend on the subcategory~$\cT^c$. For each $\cB\in\SpchT$, its preimage in~$\cT^c$ under Yoneda
\[
\phi(\cB)=\yoneda\inv(\cB)\cap \cT^c=\SET{x\in\cT^c}{\hat x\in\cB}
\]
is a (triangular) prime ideal in~$\cT^c$. This defines a surjection~$\phi\colon\SpchT\onto \SpcT$ by~\cite[Cor.\,3.9]{Balmer20a}. Each $\cB\in\SpchT$ yields a coproduct-preserving homological tensor-functor $\hB\colon \cT\to \AB=\cA/\Loc(\cB)$ as in~\eqref{eq:hB}.
\end{Rec}

\begin{Exas}
A list of examples of big tt-categories appears in~\cite[\S\,1.2]{HoveyPalmieriStrickland97}.

We said in the introduction that the functors $\hB$ provide an abstract form, for any big tt-category, of ordinary residue fields in algebraic geometry, of Morava $K$-theories in stable homotopy theory and of $\pi$-points in modular representation theory. In fact the $\hB$ \emph{improve} those examples in that they are \emph{always} tensor functors (\ie symmetric monoidal functors) whereas the functors induced by Morava $K$-theories are sometimes not symmetric monoidal (at the prime~2) and $\pi$-points almost never are. (Also, $\pi$-points are only well-defined up to some notion of equivalence whereas the $\hB$ are intrinsical. And the $\hB$ always give us a Nilpotence Theorem, which was not known with $\pi$-points.) See further details in~\cite[\S\,5]{Balmer20a}.

Now, it is one thing to know that $\phi$ is a bijection $\SpchT\isoto \SpcT$ and thus to know `how many' homological primes~$\cB$ there are in the many examples listed above. It is another thing to describe the functors~$\hB$ and the weak rings~$\EB$ in~$\cT$ in concrete terms. The latter project is the subject of the upcoming work~\cite{BalmerCameron20pp}.
\end{Exas}

\begin{Rem}
A puzzling feature of the examples treated in~\cite{Balmer20a} is that in all cases $\phi$ is a bijection. We do not know how general that is but we translate this property in relatively down-to-earth terms in \Cref{Thm:hom-local} in \Cref{App:hom-local}.
\end{Rem}

\begin{Rem}
\label{Rem:simple}%
The maximality of~$\cB\in\SpchT$ among the Serre \tensids\ of~$\cA\fp$ tells us that $\AB\fp$ is `simple': it has only the two trivial Serre \tensids, zero and~$\AB\fp$. (See \Cref{Rec:A/B} if necessary.) Simplicity has the following consequence.
\end{Rem}

\begin{Prop}
\label{Prop:ring-in-AB}%
Let $\cB\in\SpchT$ and $A\in\AB$ be a weak ring (\Cref{Def:weak-ring}) that is $\otimes$-flat, like $A=\bar E$ for some weak ring~$E\in\cT$. Then either $A=0$ or its unit is a monomorphism $\bunit\into A$.
\end{Prop}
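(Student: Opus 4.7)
The plan is to consider the kernel $K := \Ker(\eta\colon \bunit \to A)$ of the unit and show that either $A=0$ or $K=0$, by producing a Serre $\otimes$-ideal of $\AB\fp$ out of the weak-ring hypothesis and then appealing to the simplicity of $\AB\fp$ recorded in \Cref{Rem:simple}.

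The first step is to exploit that $A$ is $\otimes$-flat: applying the exact functor $A\otimes-$ to the short exact sequence $0\to K \to \bunit \to A$ yields
\[
0 \to A\otimes K \to A \otoo{A\otimes\eta} A\otimes A,
\]
whose last map is a split monomorphism by the weak-ring hypothesis on $A$. Hence $A\otimes K = 0$.

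Next I would promote this vanishing from $K$ to its finitely presented subobjects. Since $\unit\in\cT^c$, the object $\bunit = \hB(\unit)$ lies in $\AB\fp$ by \Cref{Prop:A/B}\,\eqref{it:A/B-a}, so \Cref{Prop:A/B}\,\eqref{it:A/B-c} realizes $K\into\bunit$ as the filtered colimit of its finitely presented subobjects $M\into K$. Flatness of $A$ then gives $A\otimes M \into A\otimes K = 0$, whence $A\otimes M = 0$ for every such~$M$.

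To close the argument, consider $\cJ := \SET{M\in\AB\fp}{A\otimes M = 0}$. Flatness of $A$ makes $\cJ$ closed under subobjects and extensions, right-exactness of $\otimes$ makes it closed under quotients, and $\cJ$ trivially absorbs $\otimes$-products with arbitrary objects of $\AB\fp$; thus $\cJ$ is a Serre $\otimes$-ideal of $\AB\fp$, and by simplicity it equals either $0$ or $\AB\fp$. If $\cJ = \AB\fp$ then $\bunit\in\cJ$ and $A = A\otimes\bunit = 0$. Otherwise every finitely presented subobject of $K$ vanishes, hence $K = 0$ and $\eta$ is a monomorphism. I do not anticipate any serious obstacle here: the only ingredients are flatness, the weak-ring identity, and the simplicity of $\AB\fp$.
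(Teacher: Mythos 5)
Your argument is correct and is essentially the paper's own proof: both use flatness of $A$ plus the split mono $A\otimes\eta$ to get $A\otimes\Ker(\eta)=0$, pass to finitely presented subobjects of the kernel via \Cref{Prop:A/B}\,\eqref{it:A/B-c}, and then invoke the simplicity of $\AB\fp$ applied to the Serre \tensid\ $\Ker(A\otimes-)\cap\AB\fp$. The only cosmetic difference is that the paper argues by assuming $\Ker(\eta)\neq0$ and deducing $A=0$, whereas you phrase it as a dichotomy on whether that Serre ideal is zero or all of $\AB\fp$.
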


\begin{proof}
This is a basic trick in the proof of the Nilpotence Theorem~\cite{Balmer20a}. Consider the exact sequence $\Ker(\eta)\into \bunit\xto{\eta} A$ in~$\AB$. Since $A$ is flat and $A\otimes\eta$ is a (split) monomorphism, we have $A\otimes\Ker(\eta)=0$. Suppose that $\Ker(\eta)\neq0$ and let us show that $A=0$. By \Cref{Prop:A/B}\,\eqref{it:A/B-c}, we know that $\Ker(\eta)$ is the colimit of its finitely presented subobjects. Take a finitely presented $M\neq0$ with $M\into \Ker(\eta)$. By flatness of~$A$ and $A\otimes\Ker(\eta)=0$, we see that $A\otimes M=0$. Hence $\Ker(A\otimes-)\cap \AB\fp$ is a non-zero Serre (by flatness) \tensid\ of~$\AB\fp$. As $\AB\fp$ is `simple' (\Cref{Rem:simple}) this forces $\Ker(A\otimes-)\cap \AB\fp=\AB\fp$ to contain~$\bunit$, giving~$A=0$.
\end{proof}

\begin{Rem}
\label{Rem:max-vs-max}%
During the year the author spent in Bielefeld working with Krause and Stevenson on~\cite{BalmerKrauseStevenson19}, we hesitated between maximal Serre \tensids\ $\cB\subset\cA\fp$ and maximal localizing \tensids\ $\cC\subset\cA$. We opted for the finitely presented ones for the extra properties of \Cref{Prop:A/B}, that turned out to be useful in~\cite{BalmerKrauseStevenson19} and later in the proof of the Nilpotence Theorem~\cite{Balmer20a}. It remains an open question to relate the two notions. Let us first see that every localizing \tensid\ bigger than a homological prime has the `same' pure-injective weak ring~$\EB$.
\end{Rem}

\begin{Prop}
\label{Prop:EB=EC}%
Let $\cB\in\SpchT$ and $\cC\subset\cA$ be a proper localizing \tensid, containing~$\cB$ or equivalently containing~$\Loc(\cB)$. Let $\EB$ and $\EC$ be the corresponding weak rings of \Cref{Cons:E_C}. Then there is an isomorphism $\EB\simeq \EC$ in~$\cT$ compatible with the unit maps~$\unit\to\EB$ and~$\unit\to \EC$.
\end{Prop}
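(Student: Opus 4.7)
The plan is to deduce $\EB\simeq\EC$ in $\cT$ from an isomorphism $\hEB\simeq\hEC$ in $\cA$ via \eqref{eq:Hom-pure-inj}, which in turn will come from identifying $Q_\cD(\bEB)\simeq \bEC$ inside $\AC$ as injective hulls of the unit. Since $\Loc(\cB)\subseteq \cC$, the Gabriel quotient $Q_\cC\colon\cA\to\AC$ factors as $Q_\cC=Q_\cD\circ Q_{\Loc(\cB)}$, where $Q_\cD\colon\AB\to\AC$ is the induced exact tensor-quotient, with fully faithful right adjoint $R_\cD$ satisfying $R_\cC\cong R_{\Loc(\cB)}\circ R_\cD$. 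Its kernel $\cD:=Q_{\Loc(\cB)}(\cC)\subset\AB$ is a proper localizing \tensid\ of~$\AB$, proper because $\cC$ is proper in~$\cA$.

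The heart of the argument is to show that the injective object $\bEB$ has no nonzero subobject lying in~$\cD$. Indeed, suppose $I\subseteq\bEB$ is nonzero with $I\in\cD$. By essentiality of the injective hull $\bunit\into\bEB$ in $\AB$, the intersection $I\cap\bunit$ is a nonzero subobject of $\bunit$ still in~$\cD$. By \Cref{Prop:A/B}\,\eqref{it:A/B-c} it contains a nonzero finitely presented subobject~$M$, giving a nonzero object of the Serre \tensid\ $\cD\cap\AB\fp$ of $\AB\fp$. But the maximality of~$\cB$ in $\SpchT$ says $\AB\fp$ is `simple' (\Cref{Rem:simple}), forcing $\cD\cap\AB\fp=\AB\fp$ and in particular $\bunit\in\cD$, contradicting properness of~$\cD$. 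This is the main obstacle, since it is precisely here that the homological-prime hypothesis $\cB\in\SpchT$ (via simplicity) and the properness of $\cC$ enter together.

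Once this is established, injectivity of $\bEB$ combined with the absence of nonzero $\cD$-subobjects yields $R_\cD Q_\cD(\bEB)\cong\bEB$ in $\AB$, so $Q_\cD(\bEB)$ is injective in $\AC$. A parallel essentiality argument proves $\bunit_{\AC}\into Q_\cD(\bEB)$ is essential: given a nonzero $L\subseteq Q_\cD(\bEB)$, one pulls back to $R_\cD(L)\subseteq \bEB$ (nonzero because $Q_\cD R_\cD\cong\Id$), intersects with $\bunit_\AB$ to get something nonzero by essentiality in $\AB$ and not in~$\cD$ by the previous paragraph, and applies the exact $Q_\cD$ to conclude $L\cap\bunit_{\AC}\neq0$. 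Uniqueness of injective hulls then gives $Q_\cD(\bEB)\simeq \bEC$ in $\AC$, compatibly with the maps from~$\bunit_{\AC}$.

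Finally, applying $R_{\Loc(\cB)}$ to $\bEB\cong R_\cD Q_\cD(\bEB)\cong R_\cD(\bEC)$ produces $\hEB=R_{\Loc(\cB)}(\bEB)\cong R_{\Loc(\cB)}R_\cD(\bEC)=R_\cC(\bEC)=\hEC$ in $\cA$, compatibly with the canonical maps from $\hat\unit$. Invoking \eqref{eq:Hom-pure-inj} (full faithfulness of restricted-Yoneda on pure-injectives) lifts this uniquely to an isomorphism $\EB\isoto\EC$ in~$\cT$, and naturality of Yoneda ensures this iso intertwines the units $\eta_\cB\colon\unit\to\EB$ and $\eta_\cC\colon\unit\to\EC$.
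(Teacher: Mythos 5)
Your argument is correct, and it reaches the conclusion by a genuinely different mechanism than the paper, even though both proofs begin with the same two-step factorization $Q_\cC\cong Q_\cD\circ Q_{\Loc(\cB)}$ and both ultimately rest on the simplicity of $\AB\fp$ (\Cref{Rem:simple}) together with \Cref{Prop:A/B}\,\eqref{it:A/B-c}. The paper goes ``top down'': it sets $\tilde E:=R_\cD(\bEC)\cong\hB(\EC)$, uses the \emph{weak-ring} structure of~$\EC$ and \Cref{Prop:ring-in-AB} to see that $\bunit\into\tilde E$ is a monomorphism, obtains from the universal property of the injective hull a split monomorphism $\varphi\colon\bEB\into\tilde E$, and then kills the complement by observing that it is simultaneously $\cD$-local and annihilated by~$Q_\cD$. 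You instead go ``bottom up'': your key lemma is that $\bEB$ has no nonzero subobject in~$\cD$ (proved via essentiality of $\bunit\into\bEB$, passage to a finitely presented subobject, and simplicity), from which standard Gabriel localization theory gives that $\bEB$ is $\cD$-closed, that $Q_\cD(\bEB)$ is injective, and --- after your second essentiality check --- that $Q_\cD(\bEB)$ is the injective hull of $\bunit$ in~$\AC$, hence is~$\bEC$. What your route buys is that the weak-ring structure of~$\EC$ plays no role whatsoever; what it costs is reliance on localization-theoretic facts the paper never states explicitly (a torsion-free injective is local; the exact quotient of such an object is injective; $Q_\cD$ preserves the intersection of subobjects because it is exact). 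These are all standard, so the proof is complete; the descent to~$\cT$ via \eqref{eq:Hom-pure-inj}, respectively \Cref{Prop:A/C}\,\eqref{it:A/C-d}, is the same in both treatments.
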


\begin{proof}
Since $\Loc(\cB)\subseteq\cC$, we can perform the Gabriel quotient $\cA/\cC$ in two steps:
\[
\kern4em\xymatrix@R=2em{
\cat{A} \ar@<-.3em>@{->>}[d]_-{Q_{\cB}} \ar@/_2em/@<-1em>@{->>}[dd]_-{Q_{\cC}}
&&& \hEC
\\
\AB \ar@<-.3em>@{_(->}[u]_-{R_\cB} \ar@<-.3em>@{->>}[d]_-{\bar Q}
&&& \tilde E \ar@{|->}[u]
\\
\AC \ar@<-.3em>@{_(->}[u]_-{\bar R} \ar@/_2em/@<-1em>@{_(->}[uu]_-{R_{\cC}}
&&& \bEC \ar@{|->}[u]
}
\]
for an intermediate Gabriel localization~$\bar Q\adj\bar R$. Let $\tilde E:=\bar R(\bEC)$ the injective object in~$\AB$ associated to the injective hull~$\bunit\into \bEC$ in~$\AC$. By construction of~$\EC\in\cT$, we have $\hEC\cong R_{\cC}(\bEC)\cong R_{\cB}(\tilde E)$ and therefore $\tilde E\cong Q_\cB(\hEC)=\hB(\EC)$. Since $\EC$ is a weak ring in~$\cT$, its image $\tilde E$ is a weak ring in~$\AB$, that is non-zero since $\bar Q(\tilde E)\cong\bEC\neq0$. By \Cref{Prop:ring-in-AB}, the unit $\hB(\eta_{\cC})\colon\bunit\into \tilde E$ is therefore a monomorphism in~$\AB$. On the other hand, $\hB(\eta_\cB)\colon\bunit\into\bEB$ is by definition the injective hull of~$\bunit$ in~$\AB$. Hence there exists a commutative diagram in~$\AB$
\begin{equation}
\label{eq:aux-max-vs-max}%
\vcenter{\xymatrix@C=4em@R=2em{\unit \ar@{ >->}[r]^-{\hB(\eta_\cB)} \ar@{ >->}[rd]_-{\hB(\eta_{\cC})}
& \bEB \ar@{ >->}[d]^-{\exists\,\varphi}
\\
& \tilde E
}}
\end{equation}
where $\varphi$ is a split monomorphism. Applying the exact functor~$\bar Q$ to this diagram and using that~$\bar Q(\tilde E)=\bEC$ is the injective hull of~$\bunit$ in~$\AC$, we see that $\bar Q(\varphi)$ must be an isomorphism. In summary, $\tilde E\cong \bEB\oplus N$ in~$\AB$ for an object~$N$ such that $\bar Q(N)=0$. But since $\tilde E=\bar R(\bEC)$ and $\bar R$ is fully faithful, this forces the object~$N$ to be in the image of~$\bar R$ as well (it is `$\bar R$-local'). It follows that $N\cong \bar R\bar Q(N)=0$. Therefore $\varphi$ is already an isomorphism $\bEB\simeq \tilde E\simeq \bar R(\bEC)$ in~$\AB$. By \Cref{Prop:A/C}\,\eqref{it:A/C-d} (applied to $\bat{A}=\cA/\Loc(\cB)$) the isomorphism $\varphi$ in~\eqref{eq:aux-max-vs-max} comes from~$\cT$ and is compatible with the units there.
\end{proof}

\begin{Rem}
Recall from \Cref{Prop:A/B}\,\eqref{it:A/B-b} that $\Loc(\cB)=\Ker(\hEB\otimes-)$. What \Cref{Prop:EB=EC} tells us is that a strictly larger localizing \tensid\ $\Loc(\cB)\subsetneq\cC\subsetneq\cA$ will share the same pure-injective weak ring~$\EC=\EB$ and in particular cannot be equal to $\Ker(\hEC\otimes-)$. On the other hand, we saw in Lemma~\ref{Lem:Ker[-,E]} that there is another way of constructing a localizing \tensid. Let us compare them.
\end{Rem}

\begin{Prop}
\label{Prop:Ker@<C<Ker[]}%
Let $\cC\subset\cA$ be a localizing \tensid\ and~$\EC$ the associated weak ring (\Cref{Cons:E_C}). Then we have $\Ker(\hEC\otimes-)\subseteq\cC\subseteq\Ker([-,\hEC])$.
\end{Prop}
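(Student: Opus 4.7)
The plan is to treat the two inclusions separately, using the characterization $\hEC \cong R_\cC(\bEC)$ of \Cref{Cons:E_C} together with the Gabriel adjunction $Q_\cC\dashv R_\cC$ and the flatness of $\bEC$.

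For the first inclusion $\Ker(\hEC\otimes-)\subseteq\cC$, suppose $M\in\cA$ satisfies $\hEC\otimes M=0$. The goal is to show $Q_\cC(M)=0$. Apply the tensor-exact functor $Q_\cC$: since $Q_\cC R_\cC\cong\Id$ on $\AC$, we have $Q_\cC(\hEC)\cong\bEC$, hence $\bEC\otimes Q_\cC(M)\cong Q_\cC(\hEC\otimes M)=0$ in $\AC$. Now $\bEC$ is the injective hull of $\bar\unit$, so $\bar\unit\into\bEC$ is a monomorphism, and $\bEC$ is $\otimes$-flat by \Cref{Prop:A/C}\,\eqref{it:A/C-d}. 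Tensoring the monomorphism with $Q_\cC(M)$ preserves injectivity, giving $Q_\cC(M)\cong\bar\unit\otimes Q_\cC(M)\into\bEC\otimes Q_\cC(M)=0$. Therefore $Q_\cC(M)=0$, i.e.\ $M\in\cC$.

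For the second inclusion $\cC\subseteq\Ker[-,\hEC]$, the key tool is the closed adjunction isomorphism of \Cref{Prop:A/C}\,\eqref{it:A/C-e}, namely $[M,R_\cC(N)]_\cA\cong R_\cC([Q_\cC(M),N]_{\AC})$. Specializing to $N=\bEC$ yields
\[
[M,\hEC]_{\cA}\;\cong\;R_\cC\bigl([Q_\cC(M),\bEC]_{\AC}\bigr).
\]
If $M\in\cC$ then $Q_\cC(M)=0$, and $[0,\bEC]_{\AC}=0$ (internal-hom out of zero is zero, directly from the tensor-hom adjunction). Hence $[M,\hEC]_\cA\cong R_\cC(0)=0$, as desired.

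There is no real obstacle here: both directions follow mechanically from the Gabriel formalism recollected in \Cref{Prop:A/C} together with the injective-hull description of $\bEC$. The only point to watch is making sure to invoke the closed-functor formula \eqref{it:A/C-e} rather than trying to manipulate $[M,\hEC]_\cA$ directly in $\cA$, and using flatness of $\bEC$ (not merely of $\hEC$ in $\cA$) for the first inclusion.
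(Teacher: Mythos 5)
Your proof of the second inclusion $\cC\subseteq\Ker([-,\hEC])$ is correct and is exactly the paper's argument, via \Cref{Prop:A/C}\,\eqref{it:A/C-e} applied to $N=\bEC$.

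The first inclusion, however, has a genuine gap. After reducing to $\bEC\otimes Q_\cC(M)=0$ in $\AC$, you tensor the monomorphism $\bar\eta_\cC\colon\bunit\into\bEC$ with $Q_\cC(M)$ and claim the result $Q_\cC(M)\to\bEC\otimes Q_\cC(M)$ is still a monomorphism ``by flatness of $\bEC$''. That is the wrong flatness: the map in question is $Q_\cC(M)\otimes\bar\eta_\cC$, i.e.\ the functor $Q_\cC(M)\otimes-$ applied to a monomorphism, so what you need is exactness of $Q_\cC(M)\otimes-$, that is, $\otimes$-flatness of $Q_\cC(M)$. Flatness of $\bEC$ only tells you that $\bEC\otimes-$ preserves monomorphisms, which is not what is being applied here. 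For an arbitrary $M\in\cA$ there is no reason for $Q_\cC(M)$ to be flat (the flat objects one controls are those of the form $\bar X$ for $X\in\cT$, together with projectives and injectives), so the step fails as written.

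The repair is the paper's actual argument: embed $M$ into its injective hull $M\into F$ in $\cA$; injectives of $\cA$ are $\otimes$-flat (\Cref{Rec:tensor-on-MT}). Passing to $\AC$ gives a monomorphism $\bar M\into\bar F$ with $\bar F$ flat, so $\bar F\cong\bunit\otimes\bar F\into\bEC\otimes\bar F$ \emph{is} a monomorphism. The composite $\bar M\into\bar F\into\bEC\otimes\bar F$ factors through $\bEC\otimes\bar M=0$, forcing $\bar M=0$, i.e.\ $M\in\cC$. So the idea is salvageable, but only after first embedding $M$ into a flat object; the monomorphism $\bunit\into\bEC$ cannot be tensored against $Q_\cC(M)$ directly.
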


\begin{proof}
The first inclusion can be found in~\cite{BalmerKrauseStevenson19}: Every $M\in\cA$ admits an injective hull, say $M\into F$, with $F$ necessarily $\otimes$-flat (\Cref{Rec:tensor-on-MT}). Suppose that $\hEC\otimes M=0$. Down in~$\AC$, tensoring $\bar M\into \bar F$ with $\bar\eta_\cC\colon \bunit\into \bEC$ we see that $\bEC\otimes \bar M=0$ forces $\bar M=0$. For the second inclusion, we have by \Cref{Prop:A/C}\,\eqref{it:A/C-e}
\[
[-,\hEC]_{\cA}\cong[-,R_{\cC}(\bEC)]_{\cA}\cong R_{\cC}\big([Q_{\cC}(-),\bEC]_{\AC}\big).
\]
It follows that $\cC$, which is $\Ker(Q_{\cC})$, is contained in $\Ker([-,\hEC]_{\cA})$ as claimed.
\end{proof}

We have made all the preparation for the following result.
\begin{Thm}
\label{Thm:max-to-max}%
Under~\Cref{Hyp:big-tt}, let $\cB\in\SpchT$ be a maximal Serre \tensid\ of~$\cA\fp$. Let $\EB$ be its associated weak ring in~$\cT$ (\Cref{Cons:E_C}) and
\begin{equation}
\label{eq:max-vs-max}%
{\cB'}=\Ker([-,\hEB])=\SET{M\in\MT}{[M,\hEB]=0}
\end{equation}
where $[-,-]$ denotes the internal-hom functor in~$\cA=\MT$ (see Lemma~\ref{Lem:Ker[-,E]}). Then $\cB'\subsetneq\cA$ is a maximal localizing \tensid\ such that $\cB'\cap \cA\fp=\cB$. Furthermore, $\cB'$ is the \emph{unique} maximal localizing \tensid\ containing~$\cB$.
\end{Thm}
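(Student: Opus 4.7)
The strategy is first to check the structural assertion that $\cB'$ is a proper localizing \tensid\ of~$\cA$ whose intersection with $\cA\fp$ recovers~$\cB$, and then to deduce both the maximality and the uniqueness from the preparatory \Cref{Prop:EB=EC,Prop:Ker@<C<Ker[]}. I would proceed as follows.

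First, pure-injectivity of~$\EB$ makes $\hEB$ injective in~$\cA$, so \Cref{Lem:Ker[-,E]} directly yields that $\cB'$ is a localizing \tensid. Properness follows from $[\unit,\hEB]_\cA\cong\hEB=R_\cB(\bEB)\neq0$.

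Next I would establish $\cB'\cap\cA\fp=\cB$, which is the heart of the matter. For any $M\in\cA$, \Cref{Prop:A/C}\,\eqref{it:A/C-e} gives
\[
[M,\hEB]_\cA=[M,R_\cB(\bEB)]_\cA\cong R_\cB\bigl([\bar M,\bEB]_{\AB}\bigr),
\]
and since $R_\cB$ is fully faithful, $M\in\cB'$ if and only if $[\bar M,\bEB]_{\AB}=0$. The easy inclusion $\cB\subseteq\cB'$ is then immediate because $M\in\cB$ gives $\bar M=0$. For the reverse inclusion, suppose $M\in\cA\fp$ satisfies $[\bar M,\bEB]_{\AB}=0$; by \Cref{Prop:A/B}\,\eqref{it:A/B-a}, $\bar M$ lies in $\AB\fp$. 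The same argument as \Cref{Lem:Ker[-,E]} applied inside~$\AB$, and then restricted to $\AB\fp$, shows that $\Ker[-,\bEB]_{\AB}\cap\AB\fp$ is a Serre \tensid\ of~$\AB\fp$. It is proper because $[\bunit,\bEB]_{\AB}\cong\bEB\neq0$, so by the simplicity of~$\AB\fp$ (\Cref{Rem:simple}) it must be zero. Hence $\bar M=0$, that is, $M\in\cB$.

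Finally, for maximality and uniqueness, let $\cC$ be any proper localizing \tensid\ of~$\cA$ containing~$\cB$. Since $\cC$ is localizing, $\Loc(\cB)\subseteq\cC$, so \Cref{Prop:EB=EC} gives $E_\cC\cong\EB$, and \Cref{Prop:Ker@<C<Ker[]} yields $\cC\subseteq\Ker[-,\hEC]=\Ker[-,\hEB]=\cB'$. Consequently $\cB'$ is the largest proper localizing \tensid\ containing~$\cB$, hence the unique maximal one.

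The main obstacle I expect is the intersection identity: it is precisely there that the maximality of~$\cB$ as a Serre \tensid\ of~$\cA\fp$ must be leveraged, via the simplicity of~$\AB\fp$. Once this is in hand, maximality and uniqueness fall out cleanly from \Cref{Prop:EB=EC,Prop:Ker@<C<Ker[]}, whose whole purpose seems to have been to enable this deduction.
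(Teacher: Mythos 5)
Your proof is correct and follows essentially the same route as the paper: properness and the localizing-\tensid\ structure via \Cref{Lem:Ker[-,E]}, then maximality and uniqueness by combining \Cref{Prop:EB=EC} with \Cref{Prop:Ker@<C<Ker[]}. The only difference is that you spell out the identity $\cB'\cap\cA\fp=\cB$ via the simplicity of~$\AB\fp$, whereas the paper leaves it implicit (it also follows at once from properness of~$\cB'$, the inclusion $\cB\subseteq\cB'$, and the maximality of~$\cB$ among Serre \tensids\ of~$\cA\fp$); both arguments are fine.
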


\begin{proof}
We have seen in Lemma~\ref{Lem:Ker[-,E]} that such $\cB'$ is indeed a localizing \tensid. As $\hat\eta_\cB\colon \hunit\to \hEB$ is non-zero, $\cB'$ is certainly proper. It contains~$\cB$ by \Cref{Prop:Ker@<C<Ker[]}.

To show both that $\cB'$ is maximal and unique, it suffices to show that if $\cC\subsetneq\cA$ is a localizing \tensid\ that contains~$\cB$ then $\cC\subseteq\cB'$. To see this, note that in that case $\EB\simeq\EC$ by \Cref{Prop:EB=EC}. Using \Cref{Prop:Ker@<C<Ker[]} again, we conclude that $\cC\subseteq\Ker([-,\hEC])=\Ker([-,\hEB])=\cB'$ as claimed.
\end{proof}

We can easily compare the respective residue functors of~$\cB$ and~$\cB'$.

\begin{Prop}
\label{Prop:hB-hBp}%
Let $\cB\in\SpchT$ be a homological prime and ${\cB'}=\Ker([-,\hEB])$ the maximal localizing \tensid\ of \Cref{Thm:max-to-max}. We have a commutative diagram
\begin{equation}
\label{eq:hB-hBp}%
\vcenter{\xymatrix@C=3em{
\cT \ar[r]^-{\yoneda} \ar@/^2em/[rr]^-{\hB} \ar@/_1em/[rrd]_-{\hBp}
& \cA \ar@{->>}[r]^-{Q_\cB} \ar@{->>}[rd]_(.35){Q_{\cB'}\!}
& \AB \ar@{->>}[d]^-{\bar Q}
\\
&& \ABp
}}
\end{equation}
where $\bar Q\colon \AB\onto \ABp$ is the quotient of~$\AB$ by its unique maximal localizing \tensid. In particular, the only localizing \tensids\ of~$\ABp$ are zero and~$\ABp$.
\end{Prop}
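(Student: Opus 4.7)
The plan is to exploit the nesting $\Loc(\cB)\subseteq \cB'$ granted by \Cref{Thm:max-to-max} and perform the Gabriel quotient $Q_{\cB'}\colon \cA\onto \ABp$ in two stages, mirroring the two-step factorization used already in the proof of \Cref{Prop:EB=EC}. First I would invoke the universal property of Gabriel localization: since $\Loc(\cB)\subseteq\Ker(Q_{\cB'})=\cB'$, there is a unique exact functor $\bar Q\colon \AB\to \ABp$ with $\bar Q\circ Q_\cB\cong Q_{\cB'}$, and $\bar Q$ is itself a Gabriel quotient by the localizing subcategory $\bar\cB':=Q_\cB(\cB')\subseteq \AB$. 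Moreover, $\bar Q$ is a tensor functor, since both $Q_\cB$ and $Q_{\cB'}$ are. The commutativity of~\eqref{eq:hB-hBp} then follows at once from the definitions $\hB=Q_\cB\circ \yoneda$ and $\hBp=Q_{\cB'}\circ\yoneda$ of~\eqref{eq:hB}.

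Next I would identify $\bar\cB'$ as the unique maximal localizing tensor-ideal of~$\AB$ via the standard bijective correspondence $\cC\leftrightarrow Q_\cB(\cC)$ between the localizing subcategories of~$\cA$ containing~$\Loc(\cB)$ and the localizing subcategories of~$\AB$. This correspondence preserves the tensor-ideal property in both directions because $Q_\cB$ is a colimit-preserving tensor functor and $R_\cB$ is fully faithful; it also sends proper ideals to proper ideals. Consequently the uniqueness clause of \Cref{Thm:max-to-max}---that $\cB'$ is the one and only maximal proper localizing tensor-ideal of~$\cA$ containing~$\Loc(\cB)$---transfers directly to the desired statement that $\bar\cB'=\Ker(\bar Q)$ is the unique maximal proper localizing tensor-ideal of~$\AB$.

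Finally, for the ``in particular'' clause: any proper localizing tensor-ideal $\cD\subsetneq \ABp$ pulls back along $\bar Q$ to a proper localizing tensor-ideal of~$\AB$ containing $\Ker(\bar Q)=\bar\cB'$; the just-proven maximality of~$\bar\cB'$ then forces the preimage to equal~$\bar\cB'$ itself, whence $\cD=\bar Q(\bar\cB')=0$.

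There is no real obstacle here, since the whole argument is bookkeeping with Gabriel quotients. The one point worth double-checking is that the correspondence between localizing subcategories respects the tensor-ideal property, but this is a routine consequence of working inside a closed symmetric monoidal Grothendieck category whose tensor preserves colimits (\Cref{Prop:A/C}\,\eqref{it:A/C-a}).
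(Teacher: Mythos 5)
Your proposal is correct and follows essentially the same route as the paper: factor $Q_{\cB'}$ through $Q_{\cB}$ using $\Loc(\cB)\subseteq\cB'$, and transfer the uniqueness clause of \Cref{Thm:max-to-max} through the standard correspondence between localizing \tensids\ of~$\cA$ containing~$\Loc(\cB)$ and those of~$\AB$. The paper merely states this two-step factorization in two sentences, so your write-up is simply a more detailed version of the same argument.
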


\begin{proof}
By \Cref{Thm:max-to-max}, the maximal localizing \tensid\ ${\cB'}$ is the unique one containing~$\cB$. The Gabriel quotient~$\ABp$ of~$\cA$ by~${\cB'}$ can then be done in two steps, first by~$\Loc(\cB)$, giving us~$\AB$, and then by the maximal localizing \tensid\ of~$\AB$.
\end{proof}

\begin{Rem}
We must confess that we do not know of an example where $\cB'\neq\Loc(\cB)$, \ie where the above $\bar Q$ is not an equivalence. So it is possible that the construction of \Cref{Thm:max-to-max} simply sends $\cB\in\SpcT$ to $\Loc(\cB)$. This would come as a surprise to the author though.
\end{Rem}

\begin{Rem}
Here are some possible misconceptions:
\begin{enumerate}[\rm(a)]
\item
\label{it:misc-a}%
Perhaps when $\unit$ generates~$\cT^c$ as a triangulated category, we can forget the tensor and all localizing subcategories in~$\cA=\MT$ are automatically \tensid?
\item
Perhaps the \emph{maximal} localizing subcategories are automatically \tensid, \ie perhaps maximal localizing \tensids\ are also maximal localizing?
\item
Perhaps in those residue fields $\AB$ or~$\ABp$ every object is a sum of spheres (suspensions of the unit, or invertible objects)?
\end{enumerate}
There are implications between those claims. Unfortunately, they are all false, as the following example will show.
\end{Rem}

\begin{Exa}
Let $p\ge 5$ be a prime number, $C_p$ the cyclic group of order~$p$ and $k$ a field of characteristic~$p$. Let $\cT=\Stab(kC_p)$ be the stable module category of $kC_p$-modules modulo projectives. This is an `exotic' tt-\emph{field} of~\cite{BalmerKrauseStevenson19}. The $\otimes$-unit $\unit=k$ generates~$\cT^c$ and all big objects are coproducts of compacts, although not only of $\otimes$-invertibles (for $p\neq 2,3$). This category~$\cT$ is pure semi-simple, meaning for instance that $\yoneda\colon \cT\hook\cA$ is fully faithful, or that all objects are pure-injective. The only possible~$\EB$ or $\EBp$ is~$\unit$. This shows that the only proper Serre \tensid\ of~$\cA\fp$ and the only proper localizing \tensid\ of~$\cA$ are zero. Pretty fieldy... Yet, under $kC_p\cong k[t]/t^p$, if we write $\langle i\rangle$ for the indecomposable object $k[t]/t^i$, then one can show that the map $t\cdot\colon\langle2\rangle\to \langle2\rangle$ is seen as zero by $\unit=\langle1\rangle$ and $\Sigma(\unit)=\langle p-1\rangle$. In other words, the homological functor $H=\Homcat{\cT}^\sbull(\langle1\rangle,-)$ is not faithful. In particular, its kernel defines a non-zero proper Serre subcategory of~$\cA\fp$, that cannot be tensor-ideal.
\end{Exa}

\begin{Rem}
Let us insist a little more on misconception~\eqref{it:misc-a} above, for it might have emerged in the reader's brain during the proof of Lemma~\ref{Lem:Ker[-,E]}, where we first prove that $\Ker([-,E])$ is a localizing (hence Serre) subcategory and use this to deduce that it is a \tensid\ from just verifying that it is closed under tensoring with rigid objects. However, we did \emph{not} conclude this by simply saying that rigid objects generate~$\cA$, which is true but not sufficient by the above example.
\end{Rem}

\goodbreak
\section{Support for big objects}
\label{sec:support}%
\medbreak

We come to the central definition of this paper. See \Cref{Rec:Spch} for $\SpchT$.

\begin{Def}
\label{Def:Supp}%
The \emph{(homological) support} of an arbitrary object~$X$ in the big tt-category~$\cat{T}$ is the following subset of the homological spectrum~$\SpchT$:
\[
\Supp(X):=\SET{\cB\in\SpchT}{[X,\EB]_{\cT}\neq 0}\,.
\]
\end{Def}

\begin{Prop}
\label{Prop:Supp}%
Under the construction $\cB\mapsto \cB'$ of \Cref{Thm:max-to-max}, the above support $\Supp(X)$ of an object~$X\in\cT$ is the following
\[
\Supp(X):=\SET{\cB\in\SpchT}{\hBp(X)\neq 0\textrm{ in }\ABp}.
\]
\end{Prop}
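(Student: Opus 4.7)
The plan is to unfold the definitions on both sides and string together a short chain of equivalences, using the isomorphism \eqref{eq:[-,E]} as the only nontrivial input.

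First I would note that by \eqref{eq:[-,E]} applied with $E=\EB$ (which is pure-injective by \Cref{Cons:E_C}), we have an isomorphism
\begin{equation*}
\yoneda([X,\EB]_{\cT})\;\cong\;[\hat X,\hEB]_{\cA}
\end{equation*}
in $\cA=\MT$. Since $\yoneda\colon\cT\to\cA$ is conservative (\Cref{Rec:MT}), it follows that $[X,\EB]_{\cT}\neq 0$ in $\cT$ if and only if $[\hat X,\hEB]_{\cA}\neq 0$ in $\cA$.

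Next I would translate the latter condition via the definition \eqref{eq:max-vs-max} of $\cB'=\Ker([-,\hEB]_\cA)$ from \Cref{Thm:max-to-max}: we have $[\hat X,\hEB]_\cA\neq 0$ precisely when $\hat X\notin\cB'$. Because $\cB'$ is a localizing \tensid\ of $\cA$ and the Gabriel quotient functor $Q_{\cB'}\colon \cA\onto \ABp$ has kernel exactly $\cB'$, this is in turn equivalent to $Q_{\cB'}(\hat X)\neq 0$ in $\ABp$. Finally, the commutative diagram \eqref{eq:hB-hBp} of \Cref{Prop:hB-hBp} identifies $Q_{\cB'}(\hat X)=Q_{\cB'}(\yoneda(X))$ with $\hBp(X)$, yielding
\begin{equation*}
[X,\EB]_{\cT}\neq 0 \iff \hBp(X)\neq 0,
\end{equation*}
which is precisely the asserted equality of subsets of $\SpchT$.

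There is no real obstacle here; the content has all been absorbed into the earlier machinery. The only point to be slightly careful about is that $[-,-]$ in the definition of $\Supp(X)$ denotes the internal-hom of $\cT$, so one genuinely needs the upgraded form \eqref{eq:[-,E]} of the pure-injective adjunction \eqref{eq:Hom-pure-inj} (rather than just the external Hom version) in order to pass from $\cT$ to $\cA$ and then down to $\ABp$.
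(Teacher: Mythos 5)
Your argument is correct and is essentially identical to the paper's own proof: both pass from $[X,\EB]_\cT$ to $[\hat X,\hEB]_\cA$ via \eqref{eq:[-,E]} and conservativity of $\yoneda$, then use the definition \eqref{eq:max-vs-max} of $\cB'$ and the fact that $\Ker(Q_{\cB'})=\cB'$. Nothing further is needed.
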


\begin{proof}
Let $\cB\in\SpchT$ and $\cB'=\Ker([-,\hEB])$ the corresponding maximal localizing \tensid\ of~$\cA$. By~\eqref{eq:[-,E]} and conservativity of restricted-Yoneda $\yoneda\colon \cT\to \cA$, the vanishing of $[X,\EB]_\cT$ is equivalent to the vanishing of $\yoneda([X,\EB]_\cT)\cong [\hat X,\hEB]$ and the latter is equivalent to~$\hat X\in{\cB'}$ by~\eqref{eq:max-vs-max}, which in turn is equivalent to the vanishing of~$Q_{\cB'}(\hat X)=\hBp(X)$.
\end{proof}

We are now ready to prove the basic properties listed in~\Cref{Thm:support-intro}.

\begin{Prop}
\label{Prop:support}%
The support of \Cref{Def:Supp} satisfies the following properties.
\begin{enumerate}[\rm(a)]
\item
\label{it:Supp-sigma}%
We have $\Supp(\Sigma X)=\Supp(X)$ for all~$X\in\cat{T}$.
\smallbreak
\item
\label{it:Supp-coprod}%
We have $\Supp(\sqcup_{i\in I} X_i)=\cup_{i\in I}\Supp(X_i)$ for every family~$\{X_i\}_{i\in I}$ in~$\cat{T}$.
\smallbreak
\item
\label{it:Supp-triangle}%
For every distinguished triangle~$X\to Y\to Z\to \Sigma X$ in~$\cat{T}$, we have $\Supp(Z)\subseteq\Supp(X)\cup\Supp(Y)$.
\end{enumerate}
\end{Prop}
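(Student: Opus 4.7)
The plan is to reduce everything to the reformulation of the support given in \Cref{Prop:Supp}, namely $\cB\in\Supp(X)\iff \hBp(X)\neq0$ in~$\ABp$, and then exploit the good formal properties of the functor~$\hBp$. Indeed, by the commutative diagram~\eqref{eq:hB-hBp}, $\hBp=Q_{\cB'}\circ\yoneda$ is the composite of restricted-Yoneda (which is coproduct-preserving and homological, see \Cref{Rec:MT}) with the Gabriel quotient~$Q_{\cB'}$ (which is exact and coproduct-preserving). Hence $\hBp\colon \cT\to \ABp$ is a coproduct-preserving functor sending distinguished triangles to long exact sequences in the Grothendieck abelian category~$\ABp$, and it commutes with the suspensions on both sides by \Cref{Rem:Sigma}.

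For~\eqref{it:Supp-sigma}, I would use the natural isomorphism $\hBp(\Sigma X)\cong \Sigma \hBp(X)$ together with the fact that $\Sigma$ is an auto-equivalence of~$\ABp$, so that $\hBp(\Sigma X)=0$ if and only if $\hBp(X)=0$. For~\eqref{it:Supp-coprod}, I would apply $\hBp$ to the coproduct to get $\hBp(\sqcup_{i}X_i)\cong \sqcup_{i}\hBp(X_i)$ in the Grothendieck category~$\ABp$; since coproducts in a Grothendieck category are faithful (each canonical injection is a monomorphism), this coproduct vanishes if and only if each summand vanishes, yielding the asserted equality of subsets.

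For~\eqref{it:Supp-triangle}, I would take a prime $\cB\notin\Supp(X)\cup\Supp(Y)$, so that $\hBp(X)=0=\hBp(Y)$, and combine this with~\eqref{it:Supp-sigma} to also get $\hBp(\Sigma X)=0$. Applying $\hBp$ to the distinguished triangle gives an exact sequence
\[
\hBp(Y)\too \hBp(Z)\too \hBp(\Sigma X)
\]
in~$\ABp$ whose outer terms both vanish; hence $\hBp(Z)=0$ and $\cB\notin\Supp(Z)$, as required.

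There is essentially no obstacle here: each property is an immediate consequence of the stated formal features of~$\hBp$, once \Cref{Prop:Supp} is in hand. The only minor point to be careful about is~\eqref{it:Supp-triangle}, where one must remember to invoke~\eqref{it:Supp-sigma} in order to kill the third term $\hBp(\Sigma X)$ appearing in the long exact sequence.
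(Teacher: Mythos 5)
Your proposal is correct and follows exactly the paper's argument: reduce to the reformulation of the support via $\hBp$ in \Cref{Prop:Supp} and then use that $\hBp$ is coproduct-preserving, homological and compatible with suspension. The paper leaves these verifications as "immediate consequences"; your write-up merely makes them explicit.
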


\begin{proof}
These are immediate consequences of the description of $\Supp(X)$ in \Cref{Prop:Supp} and the fact that $\hBp\colon \cT\to \ABp$ is coproduct-preserving, homological and compatible with suspension (see \Cref{Rem:Sigma}) for every $\cB\in\SpchT$.
\end{proof}

Let us check agreement with the usual support on compact-rigid objects.

\begin{Prop}
\label{Prop:agree}%
For every $x\in\cT^c$, the three notions of support coincide:
\[
\Supp(x)=\SET{\cB\in\SpchT}{\hB(x)\neq 0}=\phi\inv(\supp(x))
\]
where $\phi\colon \SpchT\onto \SpcT$ is as in \Cref{Rec:Spch}. In particular, $\Supp(0)=\varnothing$ and $\Supp(\unit)=\SpchT$. Consequently, if $\Supp(x)=\varnothing$ for~$x\in\cT^c$ then $x=0$.
\end{Prop}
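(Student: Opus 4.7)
The plan is to handle the two equalities in order and then deduce the consequences, each step unravelling definitions and exploiting rigidity of~$\hat x$ in~$\cA$ when $x\in\cT^c$.

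For the right-hand equality $\{\cB : \hB(x)\neq 0\}=\phi\inv(\supp(x))$, I would unravel both sides. By \Cref{Rec:Spch}, the condition $\cB\in\phi\inv(\supp(x))$ translates to $\hat x\notin\cB$; and since $\hat x\in\cA\fp$ is finitely presented with $\AB\fp=\cA\fp/\cB$ by \Cref{Prop:A/B}\,\eqref{it:A/B-a}, the Gabriel quotient satisfies $\hB(x)=Q_\cB(\hat x)=0$ precisely when $\hat x\in\cB$. The two conditions therefore match.

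For the left-hand equality $\Supp(x)=\{\cB : \hB(x)\neq 0\}$, I would start from \Cref{Prop:Supp}, which rewrites $\Supp(x)=\{\cB : \hBp(x)\neq 0\}$. Since $\hBp=\bar Q\circ\hB$ by \Cref{Prop:hB-hBp}, the inclusion $\supseteq$ (in the desired equality) is immediate from $\hB(x)=0\Rightarrow \hBp(x)=0$; for the reverse direction, the key ingredient will be that $\hat x$ is rigid in~$\cA$ (because $x$ is rigid-compact and Yoneda is monoidal; see \Cref{Prop:A/C}\,\eqref{it:A/C-b}). Using \Cref{Thm:max-to-max}, $\hBp(x)=0$ means $\hat x\in\cB'=\Ker([-,\hEB])$; rigidity gives $[\hat x,\hEB]\cong\widehat{x^\vee}\otimes\hEB$, which vanishes iff $\widehat{x^\vee}\in\cB$ by \Cref{Prop:A/B}\,\eqref{it:A/B-b}. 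Finally, $\bar x=Q_\cB(\hat x)$ is rigid in~$\AB$ with dual $Q_\cB(\widehat{x^\vee})$, so $\bar x=0$ iff $\bar x^\vee=0$, i.e., $\hat x\in\cB$ iff $\widehat{x^\vee}\in\cB$. Chaining these equivalences yields $\hBp(x)=0\Rightarrow\hB(x)=0$, as desired.

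The `in particular' statements will then be immediate: $\hB(0)=0$ for every~$\cB$ gives $\Supp(0)=\varnothing$, and no proper Serre \tensid\ of~$\cA\fp$ contains~$\hunit$, yielding $\Supp(\unit)=\SpchT$. For the last assertion, if $\Supp(x)=\varnothing$ for $x\in\cT^c$, then combining with the first equality and surjectivity of~$\phi$ forces $\supp(x)=\varnothing$ in~$\SpcT$, and hence $x=0$ by the classical tt-geometry of~\cite{Balmer05a}. The main obstacle is the reverse inclusion in Step two, $\hBp(x)=0\Rightarrow\hB(x)=0$: this is exactly where rigidity of~$\hat x$ does the essential work, since passing through the extra quotient~$\bar Q$ could otherwise lose information about~$x$.
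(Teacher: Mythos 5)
Your proof is correct, and the overall chain of equivalences is the one the paper uses: reduce $\cB\in\Supp(x)$ to $\hat x\notin\cB'$ via \Cref{Prop:Supp}, reduce $\hB(x)\neq0$ to $\hat x\notin\cB$ via finite presentation, and match both with $x\notin\phi(\cB)$. The only real divergence is the middle step. Where you pass from $\hat x\in\cB'=\Ker([-,\hEB])$ to $\hat x\in\cB$ by a rigidity computation ($[\hat x,\hEB]\cong\widehat{x^\vee}\otimes\hEB$, then \Cref{Prop:A/B}\,(b), then duality to trade $x^\vee$ for $x$), the paper simply invokes the equality $\cB'\cap\cA\fp=\cB$, which is already recorded in the statement of \Cref{Thm:max-to-max}; together with $\hat x\in\cA\fp$ that gives the conclusion in one line. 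Your detour is valid --- each step (rigidity of $\hat x$ under the monoidal Yoneda embedding, $\Loc(\cB)=\Ker(\hEB\otimes-)$, $\Loc(\cB)\cap\cA\fp=\cB$, and vanishing of a rigid object iff vanishing of its dual) is justified by results quoted in the paper --- but it amounts to re-proving the special case of $\cB'\cap\cA\fp=\cB$ for representable objects. The treatment of the ``in particular'' and ``consequently'' clauses matches the paper's.
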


\begin{proof}
Restricted-Yoneda on~$\cT^c$ is the actual Yoneda, $\cT^c\hook\mT$, and in particular $\hat x\in\cA\fp$ is finitely presented for all~$x\in\cT^c$. Hence for $\cB\in\SpchT$ and $x\in\cT^c$, the condition $[\hat x,\hEB]=0$, meaning $\hat x\in\cB'$, is equivalent to $\hat x\in\cB'\cap \cA\fp=\cB$. The latter reads $x\in\yoneda\inv(\cB)=\phi(\cB)$. This gives the main part. Since $\phi$ is surjective, $\Supp(x)=\varnothing$ implies $\supp(x)=\varnothing$ which in turn forces $x=0$ as $\cT^c$ is rigid.
\end{proof}

Let us check the Tensor-Product Formula. With \Cref{Thm:max-to-max} and \Cref{Prop:Supp} under our belt, it is now easy.
\begin{Thm}
\label{Thm:TPF}%
We have $\Supp(X\otimes Y)=\Supp(X)\cap\Supp(Y)$ for all $X,Y\in \cat{T}$.
\end{Thm}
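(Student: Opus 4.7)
The plan is to use the description of $\Supp$ via $\hBp$ from \Cref{Prop:Supp} and leverage the fact that $\ABp$ has only trivial localizing \tensids, which is the key consequence of \Cref{Thm:max-to-max} recorded in \Cref{Prop:hB-hBp}. Since $\hBp\colon\cT\to\ABp$ is a coproduct-preserving tensor-functor, one has the natural isomorphism $\hBp(X\otimes Y)\cong\hBp(X)\otimes\hBp(Y)$ in $\ABp$, so the question reduces, pointwise in $\cB\in\SpchT$, to showing that the tensor of two nonzero objects of~$\ABp$ of the form $\hBp(X)$ and $\hBp(Y)$ remains nonzero.

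The inclusion $\Supp(X\otimes Y)\subseteq\Supp(X)\cap\Supp(Y)$ is immediate: if either $\hBp(X)=0$ or $\hBp(Y)=0$ then $\hBp(X\otimes Y)=0$. For the reverse inclusion, fix $\cB\in\Supp(X)\cap\Supp(Y)$ and set $M=\hBp(X)$, $N=\hBp(Y)$, both nonzero in~$\ABp$. I then consider the subcategory
\[
\cJ:=\SET{L\in\ABp}{M\otimes L=0}\subseteq\ABp
\]
and claim it is a localizing \tensid\ of~$\ABp$. The point is that $M=Q_{\cB'}(\hat X)$ is $\otimes$-flat in~$\ABp$ by \Cref{Prop:A/C}\,\eqref{it:A/C-b} applied to the localizing \tensid\ $\cC=\cB'$, so $M\otimes-$ is exact and preserves coproducts; therefore $\cJ$ is closed under subobjects, quotients, extensions and coproducts. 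It is also a \tensid\ by associativity/symmetry of the tensor in~$\ABp$: if $L\in\cJ$ and $P\in\ABp$, then $M\otimes(L\otimes P)\cong(M\otimes L)\otimes P=0$.

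Now invoke \Cref{Prop:hB-hBp}: the only localizing \tensids\ of~$\ABp$ are $0$ and~$\ABp$ itself. Since $\bunit\otimes M\cong M\neq 0$, we have $\bunit\notin\cJ$, hence $\cJ\neq\ABp$, hence $\cJ=0$. In particular, $N\notin\cJ$, i.e.\ $M\otimes N\neq 0$. Translating back, $\hBp(X\otimes Y)\neq0$, so $\cB\in\Supp(X\otimes Y)$, finishing the proof.

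The only step demanding care is the verification that $\cJ$ really is a localizing \tensid, and this hinges entirely on the flatness of $\hBp(X)$ in the quotient category $\ABp$; once that is secured, the uniqueness of the maximal localizing \tensid\ established in \Cref{Thm:max-to-max} — transported via \Cref{Prop:hB-hBp} to simplicity of $\ABp$ at the level of localizing \tensids\ — does the rest of the work. No further calculation is required.
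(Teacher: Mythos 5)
Your argument is correct and is essentially the paper's own proof: both reduce via \Cref{Prop:Supp} to showing that $\hBp(X)\otimes\hBp(Y)=0$ forces one factor to vanish, and both do so by observing that $\Ker(\hBp(X)\otimes-)$ is a localizing \tensid\ of~$\ABp$ (by flatness of~$\hBp(X)$ from \Cref{Prop:A/C}\,\eqref{it:A/C-b}), which must be zero by \Cref{Prop:hB-hBp}. Your extra verification that this kernel is indeed a localizing \tensid\ is a welcome elaboration of a step the paper leaves implicit.
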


\begin{proof}
Using the description of the support given in \Cref{Prop:Supp}, and using that $\hBp\colon \cT\to \ABp$ is a tensor functor, it suffices to show that $\hBp(X)\otimes \hBp(Y)=0$ implies $\hBp(X)=0$ or $\hBp(Y)=0$, the other direction being obvious. Suppose that $\hBp(X)\neq 0$. Then since $\hBp(X)$ is $\otimes$-flat, the subcategory $\Ker(\hBp(X)\otimes-)$ is a proper localizing \tensid\ of~$\ABp$, hence it must be zero (see \Cref{Prop:hB-hBp}). So $\hBp(X)\otimes \hBp(Y)=0$ forces $\hBp(Y)=0$.
\end{proof}

\begin{Rem}
A heads-on proof of \Cref{Thm:TPF} from \Cref{Def:Supp}, \ie from the vanishing of $[X,\EB]$ and $[Y,\EB]$, would not be that easy. The difficulty has been entirely pushed in the translation $\cB\leadsto\cB'$ between maximal Serre \tensids\ of~$\cA\fp$ and maximal localizing \tensids\ of~$\cA$, obtained in~\Cref{Thm:max-to-max}.
\end{Rem}

Another case where the naive support~\eqref{eq:naive-support} works is the following:

\begin{Thm}
\label{Thm:rings}%
Under~\Cref{Hyp:big-tt}, let $A$ be a weak ring in~$\cT$ (\Cref{Def:weak-ring}). Then we have
\[
\Supp(A)=\SET{\cB\in\SpchT}{\hB(A)\neq 0}.
\]
Furthermore, if $\Supp(A)=\varnothing$ then $A=0$.
\end{Thm}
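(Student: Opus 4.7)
The plan is to identify $\Supp(A)$ with the naive support by combining \Cref{Prop:Supp}, which rewrites $\Supp(A)=\{\cB\in\SpchT : \hBp(A)\neq 0\}$, with the factorization $\hBp = \bar Q\circ \hB$ of \Cref{Prop:hB-hBp}. The inclusion $\Supp(A)\subseteq\{\cB:\hB(A)\neq 0\}$ is immediate since $\hBp(A)=\bar Q(\hB(A))$. For the reverse inclusion I would argue as follows. Suppose $\hB(A)\neq 0$. Because $\hB\colon\cT\to\AB$ is a tensor functor, $\hB(A)$ inherits a weak-ring structure with unit $\hB(\eta_A)\colon\bunit\to\hB(A)$, and it is $\otimes$-flat by \Cref{Prop:A/C}\,\eqref{it:A/C-b}. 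Then \Cref{Prop:ring-in-AB} forces $\hB(\eta_A)$ to be a monomorphism in $\AB$. Applying the exact quotient $\bar Q\colon\AB\onto\ABp$ produces a monomorphism $\bunit\into\hBp(A)$ in $\ABp$, and because $\cB'\subsetneq\cA$ is a \emph{proper} localizing \tensid\ by \Cref{Thm:max-to-max}, the unit $\bunit$ is non-zero in $\ABp$. Hence $\hBp(A)\neq 0$, i.e.\ $\cB\in\Supp(A)$.

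For the vanishing statement, I would assume $\Supp(A)=\varnothing$, so by the first part $\hB(A)=0$, and in particular $\hB(\eta_A)=0$, for every $\cB\in\SpchT$. Since $\unit$ is compact, the Nilpotence Theorem~\cite[Thm.\,1.1]{Balmer20a} applied to $\eta_A\colon\unit\to A$ yields an integer $n\geq 1$ with $\eta_A^{\otimes n}=0$. I would then observe that $A\otimes\eta_A^{\otimes n}\colon A\to A^{\otimes n+1}$ decomposes as the composition
\[
A \xto{A\otimes\eta_A} A^{\otimes 2} \xto{A^{\otimes 2}\otimes\eta_A} \cdots \xto{A^{\otimes n}\otimes\eta_A} A^{\otimes n+1},
\]
each step $A^{\otimes k}\otimes\eta_A=\id_{A^{\otimes k-1}}\otimes(A\otimes\eta_A)$ being a split monomorphism because $A\otimes\eta_A$ is one by \Cref{Def:weak-ring} and tensoring preserves split monos. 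A composition of split monos is split mono, and a split mono that is zero forces its source to be zero, so $A=0$.

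The genuinely non-formal inputs are \Cref{Prop:ring-in-AB} in the first part, which rests essentially on the simplicity of $\AB\fp$ from \Cref{Rem:simple}, and the Nilpotence Theorem of~\cite{Balmer20a} in the second. Both being already at our disposal, I do not foresee any serious obstacle; the only step calling for care is the compatibility in the final display, namely that $\id_A\otimes\eta_A^{\otimes n}$ factors as the displayed iterated composition, which is a standard manipulation in symmetric monoidal categories.
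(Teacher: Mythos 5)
Your proof is correct and follows essentially the same route as the paper: the reverse inclusion via \Cref{Prop:ring-in-AB} (monomorphic unit in~$\AB$, pushed through the exact quotient~$\bar Q$ to~$\ABp$), and the vanishing statement via the Nilpotence Theorem applied to~$\eta_A\colon\unit\to A$ combined with the split-mono iteration exhibiting~$A$ as a summand of~$A^{\otimes n+1}$. The only difference is cosmetic: you spell out the factorization of $A\otimes\eta_A^{\otimes n}$ as a composite of split monomorphisms, which the paper compresses into ``$A$ is a direct summand of~$A^{\otimes n}$ by induction.''
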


\begin{proof}
We have done some preparation. Again, let $\cB\in\SpchT$ and ${\cB'}=\Ker([-,\EB])$ the corresponding maximal localizing \tensid. In view of \Cref{Prop:hB-hBp}, we only need to show that if $\hB(A)\neq0$ then $\hBp(A)\neq0$. In \Cref{Prop:ring-in-AB}, we saw that if $\hB(A)\neq0$ then its unit $\bunit\into \hB(A)$ is a monomorphism. This uses that $\hB(A)$ is $\otimes$-flat in~$\AB$ by \Cref{Prop:A/C}\,\eqref{it:A/C-b}. Applying the exact functor $\bar Q\colon \AB\onto \ABp$ coming from \Cref{Prop:hB-hBp}, we have a monomorphism $\bunit\into \hBp(A)$ in~$\ABp$, forcing~$\hBp(A)\neq0$. For the `furthermore' part, suppose that $\Supp(A)=\varnothing$, \ie that $\hB(A)=0$ for all~$\cB\in\SpchT$. Then we can invoke the Nilpotence Theorem of~\cite{Balmer20a} for the unit $\eta\colon\unit\to A$ of the weak ring~$A$. Clearly, $\hB(\eta)=0$ for all~$\cB$, since the target of these maps are all zero. Hence $\eta\potimes{n}=0$ for some $n\gg1$. However, $A\otimes\eta\colon A\into A\otimes A$ is a split monomorphism by definition of a weak ring, hence $A$ is a direct summand of~$A\potimes{n}$ for all~$n\ge1$, by induction. Combined with $A\potimes{n}=0$ we get $A=0$ as claimed.
\end{proof}

\begin{Exa}
\label{Exa:Supp(EB)}%
Let $\cB\in\SpchT$. Then $\Supp(\EB)=\{\cB\}$. Indeed, by \Cref{Thm:rings}, we can use the `naive' support $\Supp(\EB)=\SET{\cC\in\SpchT}{\EB\otimes\EC\neq 0}$. But we know that $\EB\otimes\EB\neq 0$ (as $\EB$ is a direct summand) whereas $\EB\otimes\EC=0$ for all~$\cB\neq\cC$; see~\cite[Prop.\,5.3]{Balmer20a}.
\end{Exa}

\begin{Cor}
\label{Cor:[EB,EC]}%
For $\cB$ and $\cC$ distinct homological primes in~$\SpchT$, we have $[\EB,\EC]=0$ in~$\cT$ and~$[\hEB,\hEC]=0$ in~$\cA$.
\qed
\end{Cor}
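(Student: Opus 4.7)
The plan is to unwind the definitions and use the preceding example together with the internal-hom formula from \Cref{Rec:tensor-on-MT}.

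First I would invoke \Cref{Exa:Supp(EB)}, which states that $\Supp(\EB)=\{\cB\}$. By \Cref{Def:Supp}, this says
\[
\Supp(\EB)=\SET{\cD\in\SpchT}{[\EB,E_\cD]_{\cT}\neq 0}=\{\cB\}.
\]
Since $\cC\neq\cB$ by hypothesis, $\cC$ does not belong to this set, hence $[\EB,\EC]_{\cT}=0$. This settles the first assertion.

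For the second statement, I would apply the isomorphism~\eqref{eq:[-,E]} of \Cref{Rec:tensor-on-MT}, namely $\yoneda([X,E]_{\cT})\cong[\hat X,\hat E]_{\cA}$ valid for any pure-injective~$E$ in~$\cT$. Taking $X=\EB$ and $E=\EC$ (which is pure-injective by \Cref{Cons:E_C}), and using the vanishing $[\EB,\EC]_{\cT}=0$ just established, we get $[\hEB,\hEC]_{\cA}\cong\yoneda(0)=0$.

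There is essentially no obstacle here: both claims are immediate consequences of \Cref{Exa:Supp(EB)} and the compatibility of restricted-Yoneda with internal-homs into pure-injectives. The only thing to double-check is that $\EC$ is pure-injective, which is part of \Cref{Cons:E_C} via \Cref{Prop:A/C}\,\eqref{it:A/C-d}.
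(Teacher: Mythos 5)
Your proof is correct and is exactly the argument the paper intends: the corollary is stated with an immediate \qed, to be read off from \Cref{Exa:Supp(EB)} via \Cref{Def:Supp} for the first claim and via the isomorphism~\eqref{eq:[-,E]} (with $\EC$ pure-injective by \Cref{Cons:E_C}) for the second. Nothing is missing.
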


\goodbreak
\section{The image of $\textrm{Spc}(F)$}
\label{sec:im(Spc(F))}%
\medbreak

%
\begin{Rec}
\label{Rec:T-S}%
We consider a tensor-triangulated functor $F\colon \cT\to \cS$ between big tt-categories as in \Cref{Hyp:big-tt}. Because every tensor functor preserves rigidity and because we assume that rigid and compact objects coincide, $F$ restricts to a tt-functor $F\colon \cT^c\to \cS^c$. We assume that our functor~$F$ preserves coproducts. By Brown-Neeman Representability~\cite{Neeman96}, this is equivalent to the existence of a right adjoint~$U\colon \cS\to \cT$, that is then lax-monoidal since~$F$ is monoidal. This implies that $U(\unit)$ is a commutative ring object in~$\cT$. In fact, since $F$ preserves compacts, $U$ preserves coproducts (and thus admits another right adjoint).

By Krause~\cite[Cor.\,2.4]{Krause00}, both~$F$ and~$U$ induce exact coproduct-preserving functors $\hat F\colon \MT\to \MS$ and $\hat U\colon \MS\to \MT$ such that $\hat F\circ \yoneda_\cT\cong \yoneda_\cS\circ \hat F$ and $\hat U\circ \yoneda_\cS\cong \yoneda_\cT\circ \hat U$. These can be described by the usual Kan formulas
\[
\hat F(M)=\mathop{\colim}_{(\hat x\to M)\in (\yoneda_\cT/M)}\yoneda_\cS(F(x))
\qquadtext{and}
\hat U(N)=\mathop{\colim}_{(\hat y\to N)\in (\yoneda_\cS/N)}\yoneda_\cT(U(y)).
\]
Alternatively, when viewing~$N\in\MS$ as an additive functor $\cS^c\to \Ab$, we have $\hat U(N)=N\circ F\colon \cT^c\to \Ab$. Since $\otimes$ is colimit-preserving and~$F$ is symmetric monoidal, it follows that $\hat F$ is also symmetric monoidal. The adjunction~$F\adj U$ yields an adjunction~$\hat F\adj \hat U$ making the following diagram of adjunctions commute:
\begin{equation}
\label{eq:hatF-hatU}%
\vcenter{\xymatrix@R=1.8em{
\cT \ar@<-.3em>[d]_-{F} \ar[r]^-{\yoneda_\cT}
& \MT \ar@<-.3em>[d]_-{\hat F}
\\
\cS {}^{\vcorrect{.6}} \ar@<-.3em>[u]_-{U} \ar[r]^-{\yoneda_\cS}
& \MS.\!\! {}^{\vcorrect{.6}} \ar@<-.3em>[u]_-{\hat U}
}}
\end{equation}
Note also that $\hat F$ preserves finitely presented objects~$\hat F(\mT)\subseteq\mS$ since they are generated by the compact objects. See details in~\cite[Construction~6.10]{BalmerKrauseStevenson19}. Finally, since $F$ and~$U$ satisfy a projection formula $U(Y\otimes F(X))\cong U(Y)\otimes X$ by \cite[2.16]{BalmerDellAmbrogioSanders16}, the same holds for~$\hat F$ and~$\hat U$:
\begin{equation}
\label{eq:proj-formula}
\hat U(N\otimes \hat F(M))\cong \hat U(N)\otimes M
\end{equation}
by using that $\hat F$, $\hat U$ and~$\otimes$ commute with colimits and the above explicit formulas.
\end{Rec}

\begin{Rem}
We want to show functoriality of~$\Spch(-)$. The following result is not entirely obvious, as the commutative algebraists will recognize. For a homomorphism of commutative rings, it is not true in general that the preimage of a maximal ideal is maximal. This vindicates again the use of \emph{maximal} ideals~$\cB$ in~$\mT$ when constructing~$\SpchT$, as opposed to some kind of prime ideals.
\end{Rem}

The following is extracted from~\cite{Balmer20a}:
\begin{Prop}
\label{Prop:Finv}%
Let $\cT$ be a big tt-category as in \Cref{Hyp:big-tt}. Let $H\colon \cT\to \cD$ be a homological functor to a locally coherent Grothendieck category~$\cD$ admitting a colimit-preserving tensor. Assume that~$H$ is monoidal {\rm(\footnote{\,not necessarily \emph{symmetric} monoidal})} and maps compact objects of~$\cT$ to finitely presented objects. Assume furthermore that $H(X)$ is $\otimes$-flat in~$\cD$ for every~$X\in\cT$. Let $\hat H\colon \cA=\MT\to \cD$ be the exact coproduct-preserving functor induced by~$H$. Then $\Ker(\hat H)$ is a localizing \tensid\ generated by its finitely presented part~$\cB:=\Ker(\hat H)\cap \cA\fp$. If moreover $\cD\fp$ is simple, \ie has only zero and~$\cD\fp$ as Serre \tensids, then $\cB$ is a homological prime, \ie it is maximal in~$\cA\fp$.
\end{Prop}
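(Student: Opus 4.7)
The plan is to establish the three conclusions of the proposition in sequence: $\Ker(\hat H)$ is a localizing \tensid, it coincides with $\Loc(\cB)$, and under simplicity of $\cD\fp$ the finitely presented part $\cB$ is maximal in $\cA\fp$.

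For the first conclusion, since $\hat H$ is exact and coproduct-preserving by construction, $\Ker(\hat H)$ is automatically a localizing subcategory of $\cA$. To upgrade it to a \tensid, I would verify that $\hat H$ is itself a (not necessarily symmetric) tensor functor: the composition $\hat H\circ\yoneda\cong H$ is monoidal by hypothesis, so for compact $x,y\in\cT^c$ we have $\hat H(\hat x\otimes \hat y)\cong \hat H(\hat x)\otimes \hat H(\hat y)$. Since every object of $\cA$ is a colimit of such $\hat x$'s (\Cref{Rec:MT}) and the tensor products in $\cA$ and $\cD$ both preserve colimits in each variable, this monoidal comparison extends to all of $\cA$, whence $\Ker(\hat H)$ is the kernel of a monoidal functor and thus a \tensid.

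For the second conclusion, the inclusion $\Loc(\cB)\subseteq \Ker(\hat H)$ is immediate; for the reverse I would factor $\hat H = \tilde H \circ Q$ through the Gabriel quotient $Q\colon\cA\to\cA/\Loc(\cB)=\AB$ and show the induced exact functor $\tilde H\colon\AB\to\cD$ is faithful. Given $0\ne N\in\AB$, local coherence (\Cref{Prop:A/B}\,\eqref{it:A/B-a}) provides a nonzero finitely presented subobject $N'\hookrightarrow N$, and I lift $N'=Q(M')$ with $M'\in\cA\fp$. Were $\tilde H(N')=\hat H(M')$ to vanish, $M'$ would belong to $\cB$, hence $N'=Q(M')=0$, a contradiction. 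So $\tilde H(N')\ne 0$, and by exactness $\tilde H(N)\ne 0$, giving $\Ker(\tilde H)=0$ as required.

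For the third conclusion, I would argue that simplicity of $\cD\fp$ descends to simplicity of the quotient $\cA\fp/\cB$ (equivalently, maximality of $\cB$) along the induced faithful exact monoidal functor $\bar{\hat H}\colon \cA\fp/\cB\to\cD\fp$. That $\hat H$ restricts to finitely presented objects is automatic from the 2-term presentations by $\hat x$'s with $x\in\cT^c$ (\Cref{Rec:MT}) and the hypothesis $H(x)\in\cD\fp$. Given a supposed proper nonzero Serre \tensid $\cJ\subsetneq\cA\fp/\cB$ and $0\ne \bar M\in\cJ$, faithfulness gives $\bar{\hat H}(\bar M)\ne 0$, and simplicity of $\cD\fp$ forces $\unit_\cD\in\langle \bar{\hat H}(\bar M)\rangle_{\cD\fp}$. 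The critical step is to reflect this membership into $\cA\fp/\cB$ so as to deduce $\bunit\in\cJ$ and contradict properness; here I expect to exploit the flatness of each $H(X)$ (making $H(X)\otimes-$ exact in $\cD$) together with the 2-term presentations of finitely presented objects in $\cA$ in order to lift Serre \tensid operations in $\cD\fp$ through $\bar{\hat H}$. This reflection is the main obstacle: $\bar{\hat H}$ is faithful but not necessarily full, so intermediate objects in the generating process for $\unit_\cD$ from $\bar{\hat H}(\bar M)$ need not lie in the essential image of $\bar{\hat H}$, and descending these operations into $\cA\fp/\cB$ relies crucially on the flatness hypothesis and on the rigid-compact structure of $\cT^c$.
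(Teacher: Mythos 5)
Your handling of the first conclusion (that $\Ker(\hat H)$ is a localizing \tensid) is fine. For the identity $\Ker(\hat H)=\Loc(\cB)$, however, your argument rests on the claim that local coherence of~$\AB$ provides every nonzero object with a nonzero \emph{finitely presented} subobject. That is false in a general locally coherent Grothendieck category: over the coherent ring $R=k[x_1,x_2,\dots]$ the simple module $R/(x_1,x_2,\dots)$ has no nonzero finitely presented submodule (it is not itself finitely presented since the maximal ideal is not finitely generated). Local coherence only gives a nonzero map from a finitely presented object, whose image is merely finitely generated, and closing this gap requires the directed-union argument of \cite[Prop.\,A.6]{BalmerKrauseStevenson19} --- which is precisely what the paper cites for this step. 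So this part is citable, but your proposed proof of it does not go through as written.

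The more serious problem is the third conclusion, which is the real content of the proposition and which you explicitly leave open: you reduce maximality of~$\cB$ to ``reflecting'' the membership $\unit_{\cD}\in\langle \bar{\hat H}(\bar M)\rangle$ back along a faithful-but-not-full functor, correctly identify this reflection as the obstacle, and then do not overcome it. The paper's proof avoids the reflection entirely by using the weak rings of \Cref{Cons:E_C}. Given a Serre \tensid\ $\cC\supsetneq\cB$ of~$\cA\fp$, pick $M\in\cC\smallsetminus\cB$; then $\hat H(M)\neq0$ in~$\cD\fp$, while $M\otimes\hEC=0$ by \Cref{Prop:A/B}\,\eqref{it:A/B-b} applied to~$\cC$, so $\hat H(M)\otimes H(\EC)=0$. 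Flatness of $H(\EC)$ makes $\Ker(-\otimes H(\EC))\cap\cD\fp$ a Serre \tensid\ of~$\cD\fp$; it contains the nonzero object $\hat H(M)$, so simplicity forces it to contain $\unit_{\cD}$, i.e.\ $H(\EC)=0$. Hence $\hEC\in\Ker(\hat H)=\Loc(\cB)\subseteq\Loc(\cC)$, and the exact sequence $J\into\hunit\to\hEC$ with $J=\Ker(\hat\eta_{\cC})\in\Loc(\cC)$ then puts $\hunit$ in $\Loc(\cC)$, hence in~$\cC$, so $\cC=\cA\fp$. The idea you are missing is to apply simplicity of~$\cD\fp$ not to the ideal generated by $\bar{\hat H}(\bar M)$ but to the kernel of tensoring with the flat object $H(\EC)$ attached to the \emph{larger} ideal~$\cC$.
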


\begin{proof}
This is the first page of the proof of~\cite[Thm.\,5.6]{Balmer20a}. As the notation in \loccit\ depends on some triangular primes~$\cP$, we outline a cleaned-up version here for the reader's convenience. The kernel $\Ker(\hat H\colon \cA\to \cD)=\Loc(\cB)$ is generated by its finitely presented part~$\cB$ by~\cite[Prop.\,A.6]{BalmerKrauseStevenson19}. To show that $\cB$ is maximal when $\cD\fp$ is simple, assume that $\cC\supsetneq\cB$ is larger and show $\cC=\cA$. For that, pick $M\in\cC\smallsetminus\cB$, so that $M\otimes\hEB\neq 0$ whereas $M\otimes\hEC=0$. From the former deduce that $\hat H(M)\neq 0$ in~$\cD\fp$ and from the latter deduce that $\hat H(M)\otimes H(\EC)=0$ in~$\cD$. Use simplicity of~$\cD\fp$ and $\otimes$-flatness of~$H(\EC)$ to deduce that $H(\EC)=0$, \ie $\hEC\in\Ker(\hat H)=\Loc(\cB)\subset\Loc(\cC)$, forcing $\hunit\in\cC$ as well, from the exact sequence $J\into \hunit\to \hEC$ where $J\in\Loc(\cC)$, as always, and $\EC\in\Loc(\cC)$ as just proved.
\end{proof}

\begin{Lem}
\label{Lem:Finv}%
With notation as in~\Cref{Rec:T-S}, let $\cC\in\SpchS$.
Then $\cB:=\SET{M\in\mT}{\hat F(M)\in\cC}$ is a maximal Serre \tensid\ of~$\mT$. Furthermore, we have a diagram
\begin{equation}
\label{eq:F-res-fields}%
\vcenter{\xymatrix@R=1.5em{
\cT \ar@<-.3em>[dd]_-{F} \ar[r]^-{\yoneda_\cT}
& \cA=\MT {}_{\vcorrect{.6}} \ar@<-.3em>[dd]_-{\hat F} \ar@{->>}@<-.2em>[rd]_-{Q_{\cB}}
\\
&& \AB=\MT/\Loc(\cB) {}^{\vcorrect{.6}} \ar@<-.3em>[dd]_-{\bar F} \ar@{_(->}@<-.2em>[lu]_-{R_{\cB}}
\\
\cS \ar@<-.3em>[uu]_-{U} \ar[r]^-{\yoneda_\cS}
& \MS {}_{\vcorrect{.6}} \ar@<-.3em>[uu]_-{\hat U} \ar@{->>}@<-.2em>[rd]_-{Q_{\cC}}
\\
&& \MS/\Loc(\cC) {}^{\vcorrect{.6}} \ar@<-.3em>[uu]_-{\bar U} \ar@{_(->}@<-.2em>[lu]_-{R_{\cC}}
}}
\end{equation}
with $\yoneda_\cS\,F\cong\hat F\,\yoneda_\cT$ and $\yoneda_\cT\,U\cong\hat U\,\yoneda_\cS$ and $Q_{\cC}\,\hat F\cong\bar F\,Q_{\cB}$ and $\hat U\,R_{\cC}\cong R_{\cB}\,\bar U$.
Finally, the pure-injective $\EB$ is a direct summand of~$U(\EC)$ in~$\cT$ (see \Cref{Cons:E_C}).
\end{Lem}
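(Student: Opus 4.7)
The plan is to handle the three assertions in order: maximality of $\cB$ will follow from \Cref{Prop:Finv}; the diagram is formal from Gabriel localization; and the summand claim will reduce via an injective-hull argument to showing $\bar U \bEC$ is a nonzero $\otimes$-flat weak ring in $\AB$, to which \Cref{Prop:ring-in-AB} applies.

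For the maximality of $\cB$, I would invoke \Cref{Prop:Finv} with the composite homological tensor-functor $H := \boneda_\cC \circ F \colon \cT \to \AC$. The hypotheses are straightforward to verify from the setup: $\AC$ is locally coherent with colimit-preserving tensor (\Cref{Prop:A/B}\,\eqref{it:A/B-a} and \Cref{Prop:A/C}\,\eqref{it:A/C-a}); $H$ is monoidal because $F$ and $\boneda_\cC$ are; $H$ sends $\cT^c$ into $\AC\fp$ because $F$ preserves compactness and $\boneda_\cC(\cS^c) \subseteq \AC\fp$; each $H(X)$ is $\otimes$-flat by \Cref{Prop:A/C}\,\eqref{it:A/C-b}; and $\AC\fp$ is simple by \Cref{Rem:simple} since $\cC$ is maximal. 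By the universality of $\yoneda_\cT$, the induced exact coproduct-preserving $\hat H \colon \cA \to \AC$ agrees with $Q_\cC \circ \hat F$, whence, using $\Loc(\cC) \cap \mS = \cC$ from \Cref{Prop:A/B}\,\eqref{it:A/B-a} applied to $\cS$, one computes $\Ker(\hat H) \cap \mT = \{M \in \mT : \hat F M \in \cC\} = \cB$; \Cref{Prop:Finv} then gives $\cB \in \SpchT$. Concerning the diagram, the top and bottom Yoneda commutativities are \Cref{Rec:T-S}; since $\Loc(\cB) = \Ker(Q_\cC \circ \hat F)$, the universal property of $Q_\cB$ produces a unique exact coproduct-preserving monoidal $\bar F$ with $\bar F \circ Q_\cB \cong Q_\cC \circ \hat F$, and its right adjoint is $\bar U := Q_\cB \circ \hat U \circ R_\cC$ with $\hat U R_\cC \cong R_\cB \bar U$ following from the fact that $\hat U R_\cC$ takes values in $\Loc(\cB)$-local objects (using $\hat F(\Loc(\cB)) \subseteq \Loc(\cC)$).

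For the summand claim, the plan is to first exhibit $\bEB$ as a direct summand of $\bar U \bEC$ in $\AB$ and then lift the splitting to $\cT$ via pure-injectivity. Since $\bar F$ is exact, its right adjoint $\bar U$ preserves injectives, so $\bar U \bEC$ is injective in $\AB$, hence $\otimes$-flat; it is nonzero because the adjunction yields $\Hom_\AB(\bunit_\AB, \bar U \bEC) \cong \Hom_\AC(\bunit_\AC, \bEC) \neq 0$. The crucial point is that $\bar U \bEC$ is a weak ring in $\AB$: the weak ring $\EC$ in $\cS$ is sent to the weak ring $U(\EC)$ in $\cT$ by the lax-monoidal right adjoint $U$, then to $\hat U \hEC = \widehat{U(\EC)}$ in $\cA$ by the monoidal $\yoneda_\cT$, and finally to $Q_\cB(\hat U \hEC) \cong \bar U \bEC$ in $\AB$ by the monoidal $Q_\cB$. \Cref{Prop:ring-in-AB} then forces the unit $\bunit_\AB \into \bar U \bEC$ to be a monomorphism, so since $\bEB$ is the injective hull of $\bunit_\AB$ and $\bar U \bEC$ is injective, $\bEB$ splits off $\bar U \bEC$. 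Applying $R_\cB$ transports the splitting to $\cA$, exhibiting $\hEB$ as a direct summand of $R_\cB \bar U \bEC \cong \hat U \hEC = \widehat{U(\EC)}$; finally, since both $\EB$ and $U(\EC)$ are pure-injective in $\cT$, \eqref{eq:Hom-pure-inj} (\ie \Cref{Prop:A/C}\,\eqref{it:A/C-d}) lifts this splitting from $\cA$ back to $\cT$.

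The main obstacle will be in the third step, specifically the weak-ring identification of $\bar U \bEC$: flatness (from injectivity), nonvanishing (from the adjunction) and the injective-hull summand are fairly automatic, but one must carefully track the weak-ring structure through the chain $U \leadsto \yoneda_\cT \leadsto Q_\cB$ of lax-monoidal and monoidal functors to legitimately invoke \Cref{Prop:ring-in-AB}.
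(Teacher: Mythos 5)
Your proposal is correct and follows essentially the same route as the paper: maximality of $\cB$ via \Cref{Prop:Finv} applied to $H=Q_\cC\circ\hat F\circ\yoneda_\cT\cong\boneda_\cC\circ F$, the diagram via the universal property of $Q_\cB$ and uniqueness of right adjoints giving $R_\cB\bar U\cong\hat U R_\cC$, and the summand claim by identifying $\bar U(\bEC)$ with the image in $\AB$ of the non-zero weak ring $U(\EC)$ and applying \Cref{Prop:ring-in-AB} before lifting the splitting back to $\cT$ by \Cref{Prop:A/C}\,\eqref{it:A/C-d}. The only cosmetic difference is that you justify $\hat U R_\cC\cong R_\cB\bar U$ by checking $\Loc(\cB)$-locality rather than by the (slightly cleaner) observation that both sides are right adjoint to $\bar F Q_\cB\cong Q_\cC\hat F$.
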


\begin{proof}
The left-hand square of~\eqref{eq:F-res-fields} is~\eqref{eq:hatF-hatU}, repeated for cognitive help. Since $\hat F$ is exact, $\hat U$ preserves injectives hence $U(\EC)$ is pure-injective. Since $U$ is lax-monoidal, $U(\EC)$ is a weak ring, with unit~$\unit\to U(\EC)$ adjoint to $\eta_{\cC}\colon \unit_\cS=F(\unit_\cT)\to \EC$. In particular, $U(\EC)\neq 0$. The internal-hom version of the adjunction reads
\begin{equation}
\label{eq:aux-internal-adj}%
[M,\hat U(\hEC)]\cong \hat U[\hat F(M),\hEC]
\end{equation}
for every~$M\in\cA$, as can be checked by testing under $\Homcat{A}(\hat x,-)$ for $x\in\cT^c$.

Consider now the functor $H:=Q_{\cC}\circ \hat F\circ\yoneda_\cT$
\[
H\colon \cT\to \MT\to \MS\onto\MS/\Loc(\cC)=:\cD.
\]
This is a homological functor to which we can apply \Cref{Prop:Finv}, with $\hat H$ being necessarily~$Q_{\cC}\circ\hat F$. It tells us that $\cB=\Ker(\hat H)\cap\cA\fp$ is indeed a maximal Serre \tensid\ of~$\cA\fp$. We can then factor~$\hat H$ via~$Q_\cB$, yielding a unique tensor-exact coproduct-preserving functor $\bar F\colon \MT/\Loc(\cB)\to \MS/\Loc(\cC)$ making the following diagram commute
\[
\xymatrix{
\MT \ar[d]_-{\hat F} \ar@{->>}[r]^-{Q_{\cB}} \ar[rd]_-{\hat H}
& \MT/\Loc(\cB)=\AB \ar@{..>}[d]^-{\bar F}_-{\exists\,!}
\\
\MS \ar@{->>}[r]_-{Q_{\cC}}
& \MS/\Loc(\cC) \ar@{..>}@/_3em/[u]_-{\bar U}
}
\]
By general Grothendieck-category theory, $\bar F$ admits a right adjoint~$\bar U$. This gives us~\eqref{eq:F-res-fields}. Note that $R_{\cB}\bar U$ is the right adjoint to~$\bar F\,Q_{\cB}\cong Q_{\cC}\,\hat F$, whose right adjoint is also~$\hat U\,R_{\cC}$. Hence we have
\begin{equation}
\label{eq:aux-RU}%
R_{\cB}\circ \bar U \cong \hat U\circ R_{\cC}\,.
\end{equation}
Consider $\bunit\into \bEC$ the injective hull of the unit in~$\MS/\Loc(\cC)$, as in the statement. The pure-injective $\EC\in\cS$ corresponding to~$\cC$ is then characterized by~$\hEC=R_{\cC}(\bEC)$. Therefore, by~\eqref{eq:aux-RU}, we have $R_{\cB}(\bar U(\bEC))\cong \hat U(\hEC)=\yoneda_{\cT}(U(\EC))$. Applying~$Q_{\cB}$ to this relation, we see that the injective $\bar U(\bEC)$ is the image in~$\AB$ of~$U(\EC)$. By \Cref{Prop:ring-in-AB} the unit of this non-trivial weak ring $\bar U(\bEC)$ is a monomorphism $\bunit\into \bar U(\bEC)$ in~$\AB$ and $\bar U(\bEC)$ is injective. Therefore the injective hull $\bEB$ of~$\bunit$ is a direct summand of~$\bar U(\bEC)$. As usual, this holds in~$\cT$ already, by \Cref{Prop:A/C}\,\eqref{it:A/C-d}.
\end{proof}

Summarizing our discussion:
\begin{Thm}
\label{Thm:Spc(F)}%
Let $F\colon \cT\to \cS$ be a coproduct-preserving tt-functor between big tt-categories. Then the map $\Spch(F)\colon\SpchS \to \SpchT$
\[\begin{array}{rcl}
\SpchS & \to & \SpchT
\\
\cC & \mapsto & \hat F\inv(\cC)
\end{array}\]
is well-defined and makes the following square commute
\begin{equation}
\label{eq:Spc(F)}%
\vcenter{\xymatrix@C=5em{
\SpchS \ar[r]^-{\Spch(F)} \ar@{->>}[d]_-{\phi_{\cS}}
& \SpchT \ar@{->>}[d]^-{\phi_{\cT}}
\\
\SpcS \ar[r]^-{\Spc(F)}
& \SpcT
}}
\end{equation}
where~$\phi\colon \Spch\to \Spc$ is as in \Cref{Rec:Spch}.
\end{Thm}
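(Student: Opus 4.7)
The plan is to get well-definedness of $\Spch(F)$ directly from \Cref{Lem:Finv}, and then to verify commutativity of the square~\eqref{eq:Spc(F)} by unwinding definitions, using only the compatibility $\hat F\circ \yoneda_\cT\cong \yoneda_\cS\circ F$ from \Cref{Rec:T-S}. The substantive content of the theorem has already been absorbed into \Cref{Lem:Finv}; what remains is essentially bookkeeping.

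For well-definedness: given $\cC\in\SpchS$, define $\cB:=\SET{M\in \mT}{\hat F(M)\in\cC}$, which is meaningful because $\hat F$ preserves finitely presented objects (\Cref{Rec:T-S}). \Cref{Lem:Finv} asserts precisely that this $\cB$ is a maximal Serre \tensid\ of~$\mT$, \ie an element of $\SpchT$. So the assignment $\cC\mapsto \hat F\inv(\cC)$ gives a well-defined map $\Spch(F)\colon \SpchS\to \SpchT$. The main obstacle in the whole construction---that the preimage of a maximal Serre \tensid\ is again maximal---is not at all formal (as the parenthetical remark before \Cref{Prop:Finv} stresses), but it is exactly what \Cref{Lem:Finv} settles via \Cref{Prop:Finv}.

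For commutativity of~\eqref{eq:Spc(F)}, it suffices to test membership of a compact object $x\in \cT^c$. First, $\phi_\cT(\Spch(F)(\cC))=\SET{x\in \cT^c}{\hat x\in \hat F\inv(\cC)}=\SET{x\in \cT^c}{\hat F(\hat x)\in \cC}$. Using the isomorphism $\hat F\,\yoneda_\cT\cong \yoneda_\cS\,F$ from \Cref{Rec:T-S}, this reads $\SET{x\in \cT^c}{\widehat{F(x)}\in \cC}=\SET{x\in \cT^c}{F(x)\in \phi_\cS(\cC)}=F\inv(\phi_\cS(\cC))$, which is by definition $\Spc(F)(\phi_\cS(\cC))$. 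Hence $\phi_\cT\circ \Spch(F)=\Spc(F)\circ \phi_\cS$, proving that the square commutes.
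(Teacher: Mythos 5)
Your proposal is correct and follows exactly the paper's own route: well-definedness is delegated to \Cref{Lem:Finv} (which is where the real content, the maximality of the preimage, lives), and commutativity of the square is the formal unwinding via $\hat F\,\yoneda_\cT\cong\yoneda_\cS\,F$ on compacts. You simply spell out the bookkeeping that the paper leaves implicit.
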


\begin{proof}
We use \Cref{Lem:Finv} to show that the map~$\varphi$ is well-defined. The commutativity of the square then follows from $\hat F \circ \yoneda_{\cT}\cong\yoneda_{\cS}\circ F\colon \cT^c\to \MS$.
\end{proof}

\begin{Thm}
\label{Thm:Spc-image}%
Let $F\colon \cT\to \cS$ be a tt-functor between big tt-categories as in \Cref{Hyp:big-tt} with a right adjoint $U\colon \cS\to \cT$ (\Cref{Rec:T-S}). Then the image of $\Spch(F)\colon \SpchS\to \SpchT$ is equal to
$\Supp(U(\unit))$.
\end{Thm}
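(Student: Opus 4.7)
The plan is to replace both sides of the equality with the condition ``$F(\EB)\neq0$,'' which makes the comparison tractable. First, since $U$ is lax-monoidal, $U(\unit_\cS)$ is a genuine ring object in~$\cT$, so \Cref{Thm:rings} gives $\Supp(U(\unit))=\SET{\cB\in\SpchT}{\hB(U(\unit))\neq 0}$. Next, using $\Loc(\cB)=\Ker(\hEB\otimes-)$ from \Cref{Prop:A/B}\,\eqref{it:A/B-b}, the fact that $\yoneda_\cT$ is a conservative tensor-functor, and the projection formula $\EB\otimes U(\unit_\cS)\cong U(F(\EB))$ of \Cref{Rec:T-S}, the vanishing of $\hB(U(\unit))$ translates successively to $\hEB\otimes\yoneda_\cT(U(\unit))=0$, then to $\yoneda_\cT(U(F(\EB)))=0$, and then to $U(F(\EB))=0$. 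By the adjunction $F\dashv U$, testing $\Hom_\cT(\EB,U(F(\EB)))=\Hom_\cS(F(\EB),F(\EB))$ forces $F(\EB)=0$. Hence $\Supp(U(\unit))=\SET{\cB\in\SpchT}{F(\EB)\neq0}$, and it suffices to identify this set with the image of~$\Spch(F)$.

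For the inclusion $\Img\Spch(F)\subseteq\Supp(U(\unit))$, take $\cC\in\SpchS$ and let $\cB=\hat F\inv(\cC)$, which is a homological prime by \Cref{Thm:Spc(F)}. \Cref{Lem:Finv} provides the tensor-exact, coproduct-preserving functor $\bar F\colon\AB\to \MS/\Loc(\cC)$ satisfying $Q_{\cC}\hat F\cong \bar F Q_{\cB}$. Applying the exact $\bar F$ to the injective hull $\bunit\hookrightarrow\bEB$ in~$\AB$ yields a monomorphism $\bunit\hookrightarrow\bar F(\bEB)$ in $\MS/\Loc(\cC)$, whose domain is non-zero because $\cC\subsetneq\mS$ is proper. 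But $\bar F(\bEB)=Q_\cC\hat F(\hEB)=Q_\cC(\yoneda_\cS(F(\EB)))=\boneda_\cC(F(\EB))$, so $\boneda_\cC(F(\EB))\neq 0$ and in particular $F(\EB)\neq 0$.

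For the reverse inclusion $\Supp(U(\unit))\subseteq\Img\Spch(F)$, suppose $F(\EB)\neq 0$. Since $F$ is a tensor-functor and $\EB$ is a weak ring, $F(\EB)$ is a weak ring in~$\cS$, non-zero by hypothesis. Applying \Cref{Thm:rings} inside~$\cS$ produces some $\cC\in\SpchS$ with $\boneda_\cC(F(\EB))\neq 0$. I claim that this $\cC$ satisfies $\hat F\inv(\cC)=\cB$, which will finish the proof. Set $\cB'':=\hat F\inv(\cC)\in\SpchT$. The non-vanishing $\boneda_\cC(F(\EB))\neq 0$ means $\hat F(\hEB)\notin\Loc(\cC)$. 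Because $\hat F$ sends $\cB''$ into $\cC$ (by definition of $\cB''$) and $\hat F$ is an exact coproduct-preserving tensor functor, $\hat F(\Loc(\cB''))\subseteq\Loc(\cC)$, so $\hEB\notin\Loc(\cB'')$. By \Cref{Prop:A/B}\,\eqref{it:A/B-b} this reads $\hEB\otimes\hat E_{\cB''}\neq 0$, \ie $\EB\otimes E_{\cB''}\neq 0$ in~$\cT$ (using that $\yoneda_\cT$ is a conservative tensor-functor). Now invoke \cite[Prop.\,5.3]{Balmer20a} (recalled in \Cref{Exa:Supp(EB)}): distinct homological primes $\cB\neq\cB''$ would give $\EB\otimes E_{\cB''}=0$. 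Hence $\cB=\cB''=\Spch(F)(\cC)$, as required.

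The main obstacle is in this last direction: \Cref{Thm:rings} only furnishes \emph{some} $\cC$ detecting $F(\EB)$, and the delicate point is to check that this particular $\cC$ pulls back to our fixed $\cB$. The orthogonality $\EB\otimes E_{\cB''}=0$ for $\cB\neq\cB''$ is what finally pins this down; everything else is bookkeeping around the Yoneda and Gabriel-quotient formalism.
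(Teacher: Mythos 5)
Your proof is correct, and the harder inclusion is argued by a genuinely different route than the paper's. Your opening reduction $\Supp(U(\unit))=\SET{\cB}{F(\EB)\neq0}$ (via $\Loc(\cB)=\Ker(\hEB\otimes-)$, the projection formula $U(F(\EB))\cong U(\unit)\otimes\EB$, and the adjunction unit argument) is a clean intermediate step the paper never isolates. For $\Img\Spch(F)\subseteq\Supp(U(\unit))$ you simply push the injective hull $\bunit\into\bEB$ through the exact tensor functor $\bar F$ of \Cref{Lem:Finv}; the paper instead goes through the right adjoint, using that $\EB$ is a direct summand of $U(\EC)$ together with an action of the ring $\hat U(\hunit)$ on $\hat U(\hEC)$ --- your version is shorter and avoids that module-action bookkeeping. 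The real divergence is in the reverse inclusion: the paper explicitly manufactures a prime by forming $\cC_0=\Ker\big((\hat F(\hEB)\otimes-)\restr{\mS}\big)$, proving properness by a finitely-presented approximation of $\Ker(\hat\eta_\cB)$ plus the projection formula, and then enlarging $\cC_0$ to a maximal Serre \tensid\ $\cC$, whose preimage contains $\cB$ by construction and hence equals it by maximality. You instead observe that $F(\EB)$ is a non-zero weak ring in~$\cS$, invoke the ``furthermore'' part of \Cref{Thm:rings} in~$\cS$ to produce \emph{some} $\cC$ detecting it, and then pin down $\hat F\inv(\cC)=\cB$ via $\Loc(\cB'')=\Ker(\hat E_{\cB''}\otimes-)$ and the orthogonality $\EB\otimes\EC=0$ for distinct primes from \cite[Prop.\,5.3]{Balmer20a}. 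This is slicker, at the cost of routing through the Nilpotence Theorem (which underlies that part of \Cref{Thm:rings}), whereas the paper's construction of $\cC$ at this step is elementary; both dependencies are legitimate since everything invoked is already established. All the small verifications you rely on (that $\bar F$ is unital and exact so $\bunit\neq0$ survives in $\MS/\Loc(\cC)$, that $\hat F(\Loc(\cB''))\subseteq\Loc(\cC)$ by taking preimages of localizing subcategories) check out.
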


\begin{proof}
Let $\psi=\Spch(F)\colon \SpchS\to \SpchT$ and $U(\unit)\in\cT$. Since $U(\unit)$ is a ring object, we have $\Supp(U(\unit))=\SET{\cB\in\SpchT}{\hB(U(\unit))\neq 0}$ by \Cref{Thm:rings}.

The inclusion $\Img(\psi)\subseteq\Supp(U(\unit))$ is relatively easy. Suppose that $\cB$ belongs to the image of~$\psi$ and let $\cC\in\SpchS$ such that $\cB=\hat F\inv(\cC)$. We can then apply \Cref{Lem:Finv} and we have in particular the diagram~\eqref{eq:F-res-fields}. The lax-monoidal functor~$\hat U$ maps~$\hunit_\cS$ to a commutative ring object~$\hat U(\hat\unit)=\yoneda_{\cT}(U(\unit))$ which acts on every object of the form~$\hat U(Y)$, since $\hunit_\cS$ acts on every object~$Y\in\MS$. (This action is as a `module' but we avoid this terminology since we already mean something else by `modules'.) We apply this to $Y=\hEC$ the injective object corresponding to~$\cC$. So, the ring $\hat U(\unit)$ acts on $\hat U(\hEC)$ in~$\cA=\MT$. Applying the tensor functor~$Q_\cB\colon \cA\onto \AB$, we see that the commutative ring object~$\hB(U(\unit))$ acts on $\hB(\hEC)$ in~$\AB$. In particular if, \ababs, the ring $\hB(U(\unit))$ vanished then so would the object $\hB(U(\hEC))$. By \Cref{Lem:Finv}, we also know that $\EB$ is a direct summand of~$U(\hEC)$, hence $\hB(\EB)$ is a direct summand of~$\hB(U(\hEC))=0$. This implies the vanishing the injective hull $\hB(\EB)=\bEB$ of~$\bunit$ in~$\AB$, a contradiction. So $\hB(U(\unit))$ cannot be zero, as claimed.

Conversely, let $\cB$ be such that $\hB(U(\unit))\neq 0$ and let us show that $\cB\in\Img(\psi)$. Let $I$ be the kernel of $\hat\eta_\cB\colon \hunit\to\hEB$ in~$\cA$. We have an exact sequence $I\into \hunit \to \hEB$ and therefore $\hat F(I)\into \hat F(\hunit)\to \hat F(\hEB)$ in~$\MS$. Note that $\hat F(\hEB)=\widehat{F(\EB)}$ is flat. Consider in~$\mS$ the Serre \tensid
\[
\cC_0:=\Ker\big((\hat F(\hEB)\otimes-)\restr{\mS}\big)=\SET{N\in\mS}{\hat F(\hEB)\otimes N=0}.
\]
If, \ababs, $\cC_0=\mS$ then $\hat F(\hEB)=0$, hence $\hat F(I)=\hunit$. Since $I=\colim_{\textrm{f.p.\,}M\into I}M$ as in \Cref{Prop:A/B}\,\eqref{it:A/B-c}, and since $\hunit_{\cS}$ is finitely presented, there exists $M\into I$ finitely presented such that $\hat F(M)=\hunit$ already. Note that $M\into I\in\Loc(\cB)$ implies $M\in\cB$.  Now, we use $\hat U\hat F\cong \hat U(\unit)\otimes-$ by the projection formula~\eqref{eq:proj-formula}. In short we have $M\otimes \hat U(\unit)\cong \hat U(\unit)$ with $M\in \cB\subset\Ker(\hEB\otimes-)$. This implies that $\hEB\otimes \hat U(\unit)\cong \hEB\otimes M\otimes \hat U(\unit)\cong 0$ and thus $\EB\otimes U(\unit)=0$ meaning that $\cB\notin\Supp(U(\unit))$, a contradiction. So $\cC_0$ is a proper \tensid\ of~$\mS$. Then choose any maximal \tensid\ $\cC\in\SpchS$ containing~$\cC_0$. Then $\hat F\inv(\cC)\supseteq\cB$ by construction hence by maximality $\cB=\hat F\inv(\cC)=\psi(\cC)$ and indeed $\cB\in\Img(\psi)$.
\end{proof}

\begin{Cor}
With hypotheses as in \Cref{Thm:Spc-image}, the image of the ordinary map $\Spc(F)\colon\SpcS\to \SpcT$ on triangular spectra is~$\phi(\Supp(U(\unit)))$, where $\phi\colon \Spch\to\Spc$ is as in \Cref{Rec:Spch}.
\end{Cor}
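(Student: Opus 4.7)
The plan is to deduce this corollary directly from the main \Cref{Thm:Spc-image} by chasing the commutative square
\[
\xymatrix@C=4em{
\SpchS \ar[r]^-{\Spch(F)} \ar@{->>}[d]_-{\phi_{\cS}}
& \SpchT \ar@{->>}[d]^-{\phi_{\cT}}
\\
\SpcS \ar[r]^-{\Spc(F)}
& \SpcT
}
\]
from \Cref{Thm:Spc(F)}, together with the fact (\Cref{Rec:Spch}) that the vertical map $\phi_{\cS}\colon \SpchS\onto \SpcS$ is surjective.

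Concretely, I would write
\[
\Img(\Spc(F))=\Spc(F)(\SpcS)=\Spc(F)(\phi_{\cS}(\SpchS))=\phi_{\cT}(\Spch(F)(\SpchS))=\phi_{\cT}(\Img(\Spch(F)))
\]
where the second equality uses surjectivity of~$\phi_{\cS}$ and the third uses commutativity of the square. Applying \Cref{Thm:Spc-image}, which identifies $\Img(\Spch(F))=\Supp(U(\unit))$, yields $\Img(\Spc(F))=\phi_{\cT}(\Supp(U(\unit)))$ as desired.

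There is no real obstacle here: all the hard work has already been done in \Cref{Thm:Spc-image} (the image computation at the homological level) and \Cref{Thm:Spc(F)} (functoriality of $\Spch$ together with compatibility with~$\phi$). The only point worth flagging is that one genuinely needs the surjectivity of~$\phi_{\cS}$ in order to replace~$\SpcS$ by~$\phi_{\cS}(\SpchS)$; without that, one would only obtain the inclusion $\phi_{\cT}(\Supp(U(\unit)))\subseteq\Img(\Spc(F))$ from a subset of the source.
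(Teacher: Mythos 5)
Your argument is correct and is exactly the paper's own proof: both chase the commutative square of \Cref{Thm:Spc(F)}, use surjectivity of $\phi_{\cS}$ to identify $\Img(\Spc(F))$ with $\phi_{\cT}(\Img(\Spch(F)))$, and then apply \Cref{Thm:Spc-image}. Nothing further is needed.
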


\begin{proof}
Recall the commutative diagram~\eqref{eq:Spc(F)}. Since $\phi_{\cS}$ is onto, the image of~$\Spc(F)$ is also the image of $\phi_{\cT}\circ\Spch(F)$, that is, the image under~$\phi_\cT$ of the image of~$\Spch(F)$ determined in \Cref{Thm:Spc-image} to be $\Supp(U(\unit))$.
\end{proof}

\begin{Rem}
We emphasize that $\phi\colon \SpchT\onto \SpcT$ is a bijection in many examples by \cite[\S\,5]{Balmer20a}. In practice, it is very common that a tt-functor on the small part $F\colon \cT^c\to \cS^c$ is the restriction of coproduct-preserving tt-functor $F\colon \cT\to \cS$. So the last corollary provides a vast improvement on~\cite{Balmer18a}.
\end{Rem}

\begin{appendix}
\goodbreak
\section{Comparing homological and triangular spectra}
\label{App:hom-local}%
\medbreak

We want to discuss the injectivity of the map $\phi\colon \SpchT\onto \SpcT$, from the homological spectrum (\Cref{Rec:Spch}) to the usual triangular spectrum~$\SpcT$. By a general finite-smashing localization argument, we can reduce to the case where $(0)\in \SpcT$, meaning that $\cT^c$ is \emph{local}. For instance, $\cT$ could be $\SH_{(p)}$ the $p$-local stable homotopy category or $\Der(R)$ the derived category of a commutative local ring~$R$. In that case, we have the following characterization:

\begin{Thm}
\label{Thm:hom-local}%
Let $\cT$ be a big tt-category as in \Cref{Hyp:big-tt} and assume its subcategory of rigid-compact~$\cT^c$ is \emph{local}, meaning that $c\otimes d=0$ with $c,d\in\cT^c$ forces $c=0$ or $d=0$. Then the following are equivalent:
\begin{enumerate}[\rm(i)]
\item
\label{it:fiber=*}%
The fiber of $\phi\colon \SpchT\onto \SpcT$ over~$(0)$ is reduced to a single point.
\smallbreak
\item
\label{it:hom-local}%
Whenever two maps~$f$, $g$ in~$\cT^c$ satisfy $f\otimes g=0$ then either $f$ is \tensnil\ on some non-zero compact (that is, there exists $z\in\cT^c$, $z\neq0$ such that $f\potimes{n}\otimes z=0$ for $n\gg1$) or $g$ is \tensnil\ on some non-zero compact.
\end{enumerate}
\end{Thm}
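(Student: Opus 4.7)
The plan is to translate both conditions into statements about morphism $\otimes$-ideals in $\cT^c$ via the Freyd envelope $\cT^c \hookrightarrow \mT$. A Serre $\otimes$-ideal $\cB \subseteq \mT$ is encoded by its morphism ideal $I_\cB := \{f \text{ in } \cT^c : \hB(f) = 0 \text{ in } \AB\}$, and $\cB \in \phi^{-1}((0))$ iff $I_\cB$ contains no identity $\id_x$ with $x \neq 0$ (equivalently, $\cB \cap \yoneda(\cT^c) = 0$). So (i) amounts to uniqueness of a maximal such morphism ideal.

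The key candidate is $\mathcal{N}$, the class of morphisms $f$ in $\cT^c$ with $f\potimes{n} \otimes z = 0$ for some $n \geq 1$ and some non-zero compact $z$. Using locality of $\cT^c$ (with a binomial-style tensor expansion of $(f_1+f_2)\potimes{n_1+n_2}$ for closure under sums), $\mathcal{N}$ is a two-sided $\otimes$-ideal of morphisms, and locality also forces $\id_x \notin \mathcal{N}$ for $x \neq 0$ since $\id_x\potimes{n}\otimes z = \id_{x\potimes{n}\otimes z}$. Condition (ii) then reads precisely: $\mathcal{N}$ is $\otimes$-prime.

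Bridge: $\mathcal{N} \subseteq I_\cB$ for every $\cB \in \phi^{-1}((0))$. From $f\potimes{n} \otimes z = 0$ with $z$ non-zero compact, apply $\hB$: $\hB(f)\potimes{n} \otimes \hB(z) = 0$ in $\AB$ with $\hB(z) \neq 0$ finitely presented. Simplicity of $\AB\fp$ forces the Serre $\otimes$-ideal generated by $\hB(z)$ to contain $\bunit$; showing that the subcategory $\{M \in \AB\fp : \hB(f)\potimes{k} \otimes M = 0 \text{ for some } k\}$ is closed under Serre $\otimes$-operations puts $\bunit$ in this subcategory, giving $\hB(f)\potimes{N} = 0$ for some $N$, and a morphism-level Tensor-Product Formula (see \Cref{Thm:TPF}, passing to $\ABp$ if needed) forces $\hB(f) = 0$, so $f \in I_\cB$. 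Under (i) the bridge becomes an equality $I_{\cB_0} = \mathcal{N}$ for the unique $\cB_0$ by maximality; given $f \otimes g = 0$, the residue functor of $\cB_0$ and the morphism-level TPF in $\bat{A}_{\cB_0}$ yield $f \in \mathcal{N}$ or $g \in \mathcal{N}$. For (ii) $\Rightarrow$ (i), suppose $\cB_1 \neq \cB_2$ were both in the fiber: by \Cref{Cor:[EB,EC]} one has $E_{\cB_1} \otimes E_{\cB_2} = 0$, and a compact-approximation argument (writing the pure-injectives as filtered colimits of compacts and pushing the vanishing back to a finite stage) extracts $f, g$ in $\cT^c$ with $f \otimes g = 0$, neither $\tensnil$ on any non-zero compact, directly contradicting (ii).

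The main obstacle is the saturation step in the bridge: closure of $\hB(f)$-power-annihilation under subobjects, quotients, and tensor with finitely presented objects is immediate, but closure under extensions demands a careful snake-diagram / power-bumping argument, since subobjects of $\otimes$-flat objects in $\AB$ need not themselves be flat.
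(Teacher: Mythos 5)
Your direction \eqref{it:hom-local}$\Rightarrow$\eqref{it:fiber=*} follows the paper's route (from $E_{\cB_1}\otimes E_{\cB_2}=0$, which is \cite[Prop.\,5.3]{Balmer20a} rather than \Cref{Cor:[EB,EC]}, approximate $\eta_{\cB_1}\otimes\eta_{\cB_2}=0$ through compacts), but you simply assert that the extracted $f,g$ are not \tensnil\ on any non-zero compact. That assertion is the real content of this implication and it uses the weak-ring structure of~$\EB$: from $f\potimes{n}\otimes z=0$ one gets $\eta_{\cB}\potimes{n}\otimes z=0$, and since $\eta_{\cB}\otimes\EB$ is a split monomorphism this forces $\EB\otimes z=0$, hence $z\in\phi(\cB)=(0)$, so $z=0$. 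Fillable, but it must be said.

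The direction \eqref{it:fiber=*}$\Rightarrow$\eqref{it:hom-local} has a genuine gap, and it is not the one you flag (extension-closure of the power-annihilation subcategory is the routine diagram chase with flat targets). Two steps fail. First, the ``morphism-level TPF'': simplicity of~$\AB\fp$ only yields the dichotomy that a map $\bunit\to\bar x$ with $\bar x$ flat is either a monomorphism or \tensnil\ (tensor with its kernel, quotient by the Serre \tensid\ that kernel generates); it does not yield ``$u\otimes v=0\Rightarrow u=0$ or $v=0$'', and nothing rules out non-zero \tensnil\ maps out of~$\bunit$ in~$\AB$ or~$\ABp$. For the same reason your bridge $\mathcal N\subseteq I_\cB$ is unjustified, since it needs ``$\hB(f)$ \tensnil\ $\Rightarrow$ $\hB(f)=0$''. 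Second, the equality $I_{\cB_0}=\mathcal N$ ``by maximality'': the inclusion $I_{\cB_0}\subseteq\mathcal N$, i.e.\ $\hB(f)=0\Rightarrow f\potimes{n}\otimes z=0$ for some $z\neq0$, is essentially the whole theorem and does not follow formally from maximality of~$\cB_0$ (maximality among Serre \tensids\ of objects does not transfer to maximality of the associated morphism ideal). The paper sidesteps both issues by first forming the Serre \tensid\ $\cB_0=\SET{M\in\cA\fp}{\exists\, z\neq0 \textrm{ with } \hat z\otimes M=0}$ --- this is where locality of~$\cT^c$ is genuinely needed, for extension-closure --- and working in $\cA\fp/\cB_0$, where ``\tensnil'' means precisely ``\tensnil\ on a non-zero compact''. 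Hypothesis~\eqref{it:fiber=*} says this quotient has a \emph{unique} maximal Serre \tensid~$\cB$; if the kernels of $\tilde f$ and $\tilde g$ both generated proper Serre \tensids, uniqueness would place both inside the single~$\cB$, so both maps would become monomorphisms in the simple quotient and $\tilde f\otimes\tilde g$ would be a monomorphism, contradicting $f\otimes g=0$. Only the weak ``mono or \tensnil'' dichotomy is ever used; I recommend restructuring your argument along these lines.
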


\begin{proof}
Suppose~\eqref{it:fiber=*} and let us prove~\eqref{it:hom-local}. By standard adjunction tricks using rigidity, we can assume that $f\colon \unit\to x$ and $g\colon \unit\to y$ both start from~$\unit$. Let $\cA=\MT$ as before and consider the subcategory of~$\cA\fp=\mT$ given as follows
\[
\cB_0:=\SET{M\in\cA\fp}{\textrm{ there exists }z\in\cT^c,\,z\neq 0\textrm{ with }\hat z\otimes M=0}.
\]
Since $\cT^c$ is local, $\cB_0$ is closed under extensions, hence $\cB_0$ is a Serre \tensid\ of~$\cA\fp$. Consider the Gabriel quotient
\[
Q\colon \cA\fp\onto \tilde\cA\fp:=\cA\fp/\cB_0.
\]
Let us write $\tilde z\in\tilde\cA\fp$ for $Q(\hat z)$ for all~$z\in\cT^c$. The maximal \tensids\ of~$\tilde\cA\fp$ are in one-to-one correspondence with those of~$\cA\fp$ containing~$\cB_0$. We claim that these are exactly those in~$\phi\inv(0)$. Indeed, the kernel of the unit $\coev\colon \unit\to \hat z^\vee\otimes \hat z$ in~$\cA\fp$, being killed by~$\hat z$ (in~$\cT^c$ already), becomes zero in~$\tilde\cA\fp$ whenever~$z\neq 0$. In other words, $\tilde\unit$ is a subobject of $\tilde z^\vee\otimes\tilde z$ for every $z\in\cT^c$ non-zero and in particular no proper \tensid\ of~$\tilde\cA\fp$ contains any~$\tilde z$ for $z\neq 0$. Hence the preimage $\phi(\cB)=\yoneda\inv(\cB)=(0)$ is zero for every maximal \tensid\ of~$\cA\fp$ containing~$\cB_0$. Under~\eqref{it:fiber=*}, we just proved that $\tilde\cA\fp$ has a unique maximal Serre \tensid, say~$\cB$. Consider now the kernels $M\into \tilde\unit\xto{\tilde f}\tilde x$ and $N\into \tilde\unit\xto{\tilde g}\tilde y$ in~$\tilde\cA\fp$. Since $\tilde f\otimes M=0$ and $\tilde g\otimes N=0$ by the standard argument (using flatness of~$\tilde x$ and $\tilde y$), we see that $\tilde f$ is \tensnil\ on the Serre \tensid\ $\ideal{M}$ generated by~$M$ and similarly for~$\tilde g$ on~$\ideal{N}$. If, \ababs, both $\ideal{M}$ and $\ideal{N}$ are proper then they are both contained in the unique maximal Serre \tensid~$\cB$ of~$\tilde\cA\fp$. Descending to~$\tilde\cA\fp/\cB$, we therefore have two monomorphisms $\bar f\colon \bunit\into \bar x$ and $\bar g\colon \bunit\into \bar y$, with $\bar x$ and $\bar y$ $\otimes$-flat. This implies that $\bar f\otimes \bar g\colon \bunit\into \bar x\otimes\bar y$ is still a monomorphism. This contradicts the assumption that $f\otimes g=0$. Consequently, one of the Serre \tensids\ $\ideal{M}$ or~$\ideal{N}$ contains~$\tilde\unit$, hence $\tilde f$ or $\tilde g$ is \tensnil\ in $\tilde\cA\fp=\cA\fp/\cB_0$, and we conclude by definition of~$\cB_0$ that $f$ or $g$ is nilpotent on a non-zero compact.

Conversely, suppose~\eqref{it:hom-local} and let us prove~\eqref{it:fiber=*}. Assume \ababs, that there exist two different homological primes $\cB,\cC\in\phi\inv((0))$ in the fiber of~$\phi$ above zero. Then by \cite[Prop.\,5.3]{Balmer20a}, we have $\EB\otimes \EC=0$ and in particular the tensor of $\eta_\cB\colon \unit\to \EB$ with $\eta_\cC\colon \unit \to \EC$ is zero. It follows from the fact that $\hEB$ and $\hEC$ are colimits of representables and from finite presentation of~$\hunit$ that there exist factorizations $\eta_\cB\colon \unit\xto{f} x\to \EB$ and $\eta_\cC\colon \unit\xto{g} y\to \EC$ with $f\otimes g=0$ and $x,y\in\cT^c$. By~\eqref{it:hom-local}, one of~$f$ or~$g$ is \tensnil\ on some non-zero compact, say~$f$ is. Hence there exists $z\neq0$ in~$\cT^c$ such that $f\potimes{n}\otimes z=0$ for some $n\gg1$. But then $\eta_\cB\potimes{n}\otimes z=0$. Since $\EB$ is a weak ring, $\eta_\cB\otimes\EB$ is a split monomorphism, hence $\eta_\cB\potimes{n}\otimes z=0$ forces $\EB\otimes z=0$. The latter implies $0\neq z\in \cB$, or $\phi(\cB)=\yoneda\inv(\cB)\neq 0$, contradicting the choice of~$\cB$ in the fiber of~$(0)$. So the existence of two points in that fiber is absurd.
\end{proof}

\begin{Exa}
\label{exa:local}%
There are of course many examples where \Cref{Thm:hom-local} applies, since we know that $\phi$ is a bijection in all examples where $\SpchT$ and~$\SpcT$ have been computed, see~\cite[\S\,5]{Balmer20a}. One can also cook up a direct verification of~\eqref{it:hom-local} in \Cref{Thm:hom-local} in the derived category of a local ring, for instance. It is conceivable that my stable $\infty$-friends could find a direct general proof of~\eqref{it:hom-local} in \Cref{Thm:hom-local}, without assuming knowledge of~$\SpcT$, thus giving an abstract proof that $\phi$ is a bijection in some generality. The challenge is open!
\end{Exa}

\end{appendix}


\begin{thebibliography}{BIK12b}

\bibitem[Bal05]{Balmer05a}
Paul Balmer.
\newblock The spectrum of prime ideals in tensor triangulated categories.
\newblock {\em J. Reine Angew. Math.}, 588:149--168, 2005.

\bibitem[Bal18]{Balmer18a}
Paul Balmer.
\newblock On the surjectivity of the map of spectra associated to a
  tensor-triangulated functor.
\newblock {\em Bull. Lond. Math. Soc.}, 50(3):487--495, 2018.

\bibitem[Bal19]{Balmer19}
Paul Balmer.
\newblock A guide to tensor-triangular classification.
\newblock In {\em Handbook of Homotopy Theory}. Chapman \& Hall/CRC, 2019.
\newblock Preprint \url{https://arxiv.org/abs/1912.08963}.

\bibitem[Bal20]{Balmer20a}
Paul Balmer.
\newblock Nilpotence theorems via homological residue fields.
\newblock {\em Tunis. J. Math.}, 2(2):359--378, 2020.

\bibitem[BC]{BalmerCameron20pp}
Paul Balmer and James Cameron.
\newblock On homological residue fields.
\newblock In preparation.

\bibitem[BDS16]{BalmerDellAmbrogioSanders16}
Paul Balmer, Ivo Dell'Ambrogio, and Beren Sanders.
\newblock Grothendieck-{N}eeman duality and the {W}irthm\"uller isomorphism.
\newblock {\em Compos. Math.}, 152(8):1740--1776, 2016.

\bibitem[BF11]{BalmerFavi11}
Paul Balmer and Giordano Favi.
\newblock Generalized tensor idempotents and the telescope conjecture.
\newblock {\em Proc. Lond. Math. Soc. (3)}, 102(6):1161--1185, 2011.

\bibitem[BIK08]{BensonIyengarKrause08}
Dave~J. Benson, Srikanth~B. Iyengar, and Henning Krause.
\newblock Local cohomology and support for triangulated categories.
\newblock {\em Ann. Sci. \'Ec. Norm. Sup\'er. (4)}, 41(4):573--619, 2008.

\bibitem[BIK11a]{BensonIyengarKrause11c}
Dave Benson, Srikanth~B. Iyengar, and Henning Krause.
\newblock Stratifying triangulated categories.
\newblock {\em J. Topol.}, 4(3):641--666, 2011.

\bibitem[BIK11b]{BensonIyengarKrause11a}
David~J. Benson, Srikanth~B. Iyengar, and Henning Krause.
\newblock Stratifying modular representations of finite groups.
\newblock {\em Ann. of Math. (2)}, 174(3):1643--1684, 2011.

\bibitem[BIK12a]{BensonIyengarKrause12a}
David~J. Benson, Srikanth Iyengar, and Henning Krause.
\newblock {\em Representations of finite groups: local cohomology and support},
  volume~43 of {\em Oberwolfach Seminars}.
\newblock Birkh\"auser/Springer, Basel, 2012.

\bibitem[BIK12b]{BensonIyengarKrause12b}
David~J. Benson, Srikanth~B. Iyengar, and Henning Krause.
\newblock Colocalizing subcategories and cosupport.
\newblock {\em J. Reine Angew. Math.}, 673:161--207, 2012.

\bibitem[BIK13]{BensonIyengarKrause13}
Dave Benson, Srikanth~B. Iyengar, and Henning Krause.
\newblock Module categories for group algebras over commutative rings.
\newblock {\em J. K-Theory}, 11(2):297--329, 2013.
\newblock With an appendix by Greg Stevenson.

\bibitem[BKS17]{BalmerKrauseStevenson17app}
Paul Balmer, Henning Krause, and Greg Stevenson.
\newblock The frame of smashing tensor-ideals.
\newblock {\em Math. Proc. Cambridge Philos. Soc.}, to appear, preprint 2017.

\bibitem[BKS19]{BalmerKrauseStevenson19}
Paul Balmer, Henning Krause, and Greg Stevenson.
\newblock Tensor-triangular fields: ruminations.
\newblock {\em Selecta Math. (N.S.)}, 25(1):25:13, 2019.

\bibitem[HPS97]{HoveyPalmieriStrickland97}
Mark Hovey, John~H. Palmieri, and Neil~P. Strickland.
\newblock Axiomatic stable homotopy theory.
\newblock {\em Mem. Amer. Math. Soc.}, 128(610), 1997.

\bibitem[HS99]{HoveyStrickland99}
Mark Hovey and Neil~P. Strickland.
\newblock Morava {$K$}-theories and localisation.
\newblock {\em Mem. Amer. Math. Soc.}, 139(666):viii+100, 1999.

\bibitem[Kra00]{Krause00}
Henning Krause.
\newblock Smashing subcategories and the telescope con\-jec\-ture\,--\,an
  algebraic approach.
\newblock {\em Invent. Math.}, 139(1):99--133, 2000.

\bibitem[Lur17]{LurieHA}
Jacob Lurie.
\newblock Higher algebra.
\newblock Online at \url{http://www.math.harvard.edu/~lurie/}, 2017.

\bibitem[Nee96]{Neeman96}
Amnon Neeman.
\newblock The {G}rothendieck duality theorem via {B}ousfield's techniques and
  {B}rown representability.
\newblock {\em J. Amer. Math. Soc.}, 9(1):205--236, 1996.

\bibitem[Nee00]{Neeman00}
Amnon Neeman.
\newblock Oddball {B}ousfield classes.
\newblock {\em Topology}, 39(5):931--935, 2000.

\bibitem[Nee01]{Neeman01}
Amnon Neeman.
\newblock {\em Triangulated categories}, volume 148 of {\em Annals of
  Mathematics Studies}.
\newblock Princeton University Press, 2001.

\bibitem[Ste13]{Stevenson13}
Greg Stevenson.
\newblock Support theory via actions of tensor triangulated categories.
\newblock {\em J. Reine Angew. Math.}, 681:219--254, 2013.

\end{thebibliography}

\end{document}